\setlist[enumerate]{label=(\arabic*)}
\newtheorem{thm}{Theorem}[section]
\newtheorem{prop}[thm]{Proposition}
\newtheorem{cor}[thm]{Corollary}
\newtheorem{lem}[thm]{Lemma}
\newtheorem{rem}[thm]{Remark}
\providecommand{\acf}{\Bbbk}
\providecommand{\HH}{\mathbb{H}}
\providecommand{\CC}{\mathbb{C}}
\providecommand{\RR}{\mathbb{R}}
\providecommand{\NN}{\mathbb{N}}
\providecommand{\hr}{\HH_R}
\providecommand{\ccr}{\CC_R}
\providecommand{\ii}{\mathbf{i}}
\providecommand{\jj}{\mathbf{j}}
\providecommand{\kk}{\mathbf{k}}
\providecommand{\id}{\mathrm{id}}
\providecommand{\eqref}[1]{(\ref{#1})}
\begin{document}
\title{Parallels between quaternionic and matrix nullstellens\"atze}
\author{Jakob Cimprič\fnref{fn1,fn2} }
\fntext[fn1]{The author is affiliated with \textit{University of Ljubljana, Faculty of Mathematics and Physics} 
and \textit{Institute of Mathematics, Physics and Mechanics}; both in Ljubljana, Slovenia.}
\fntext[fn2]{The author acknowledges the financial support from the \textit{Slovenian Research and Innovation Agency}  (research core funding no. P1-0222 and grants no. J1-50002 and no. J1-60011)}
\ead{ jaka.cimpric@fmf.uni-lj.si}


\begin{abstract}
We prove a new quaternionic and a new matrix nullstellensatz.
We also show that both theories are intertwined.
For every $g_1,\ldots,g_m,f \in \HH[x_1,\ldots,x_d]$ (where $x_1,\ldots,x_d$ are central) we show that the following are equivalent:
(a) For every $a \in \HH^d$ whose components pairwise commute and which satisfies $g_1(a)=\ldots=g_m(a)=0$
we have  $f(a)=0$. (b) $f$ belongs to the smallest semiprime left  ideal containing $g_1,\ldots,g_m$.
On the other hand, for every $G_1,\ldots,G_m, F \in M_n(\acf[x_1,\ldots,x_d])$, where $\acf$ is an algebraically closed field,
we show that the following are equivalent (where $I$ is the left ideal generated by $G_1,\ldots,G_m$):
(a) For every $a \in \acf^d$ and $v \in \acf^n$ such that $G_1(a)v=\ldots=G_m(a)v=0$, we have $F(a)v=0$.
(b) For every $A \in M_n(\acf)$ there exists $N \in \NN_0$ such that $(AF)^N \in I+I(AF)+\ldots+I(AF)^N$.
\end{abstract}

\maketitle

\section{Introduction}
The aim of this paper is to discuss and improve on three recent generalizations of Hilbert's Nullstellensatz.
The first and the second one extend it from complex polynomials to central quaternionic polynomials
(see the Strong Nullstellensatz \cite[Theorem 1.2]{ap1} by G. Alon and E. Paran  and 
the Explicit Hilbert’s Nullstellensatz over quaternions \cite[Theorem 1.5]{aya} by M. Aryapoor; i.e. claims (1) and (2) of Theorem \ref{thmb} below)
while the third one extends it to matrix polynomials 
(see the one-sided Hilbert’s Nullstellensatz for matrix polynomials \cite[Theorem 4]{cimpric} by the author; i.e. claims (3) and (4) of Theorem \ref{thmd} below.)

It is interesting to note that these  results went into completely different directions.
The results about quaternionic polynomials are about  an explicit version and about completely prime left ideals while the results about matrix polynomials are about prime and semiprime left ideals.
For quaternionic polynomials we will complement these results with results about  prime and semiprime left ideals
(see claims (3) and (4) of Theorem \ref{thmb})  while for matrix polynomials we will complement them with an explicit version and results about completely prime ideals (see claims (1) and (2) of Theorem \ref{thmd}).
In summary, we will show that the results about quaternionic and matrix polynomials
are  entirely parallel which suggests a possible existence of a deeper theory.

We start with some notation.  
Let $\HH[x_1,\ldots,x_d]$ be the ring of quaternionic polynomials in $d$ central variables.
For every quaternionic polynomial
\begin{equation}
\label{def1}
f =\sum_{i_1,\ldots,i_d} c_{i_1,\ldots,i_d} x_1^{i_1} \cdots x_d^{i_d} \in \HH[x_1,\ldots,x_d]
\end{equation}
and every point $a=(a_1,\ldots,a_d) \in \HH^d$, we define the value $f(a)$ by
\begin{equation}
\label{def2}
f(a)=\sum_{i_1,\ldots,i_d} c_{i_1,\ldots,i_d} a_1^{i_1} \cdots a_d^{i_d} \in \HH.
\end{equation}
A point $a=(a_1,\ldots,a_d) \in \HH^d$ is \textit{central}  iff $a_i a_j=a_j a_i$ for all $i$ and $j$.
(This terminology is from \cite{ap3}. In \cite{aya}, such a point is ``good''.)
The set of all central points will be denoted by $\HH_c^d$. Pick a central point $a$
and quaternionic polynomials $f=\sum_\alpha c_\alpha x^\alpha$  and $g=\sum_\beta d_\beta x^\beta$ (in multiindex notation) and note that 
\begin{equation}
\label{def4}
(fg)(a) =\sum_\alpha \sum_\beta c_\alpha d_\beta a^{\alpha+\beta}=\sum_\alpha \sum_\beta c_\alpha d_\beta a^\beta a^\alpha = \sum_\alpha c_\alpha g(a) a^\alpha
\end{equation}
which implies that
the set
\begin{equation}
\label{def3}
I_a:=\{ g \in \HH[x_1,\ldots,x_d] \mid g(a)=0\}
\end{equation}
is a left ideal for every central point $a$.

Theorem \ref{thma} is the Weak Nullstellensatz from \cite[Theorem 1.1]{ap1}.

\begin{thm}
\label{thma}
For every $a \in \HH_c^d$ the left ideal $I_a$ is maximal.  Moreover, every maximal left ideal of $\HH[x_1,\ldots,x_d]$
is of this form. In particular, for every proper left ideal of $\HH[x_1,\ldots,x_d]$
there exists a central point annihilating all of its elements.
\end{thm}

For every left ideal $I$ of $\HH[x_1,\ldots,x_d]$ we write
$\mathcal{Z}(I)$ for the set of all central points that are annihilated by all elements of $I$.
For every subset $Z$ of $\HH^d_c$ we write $\mathcal{I}(Z)$ for the set of
all quaternionic polynomials that annihilate all central points from $Z$.
The left ideal
\begin{equation}
\sqrt{I} := \mathcal{I}(\mathcal{Z}(I))
\end{equation}
will be called \textit{the radical} of a given left ideal $I$.
Our aim is to give several characterizations of the radical.
We start with  two trivial ones, then we recall two recent ones
and finally we give two new characterizations.

As $\mathcal{Z}(I)=\{a \in \HH^d_c \mid I \subseteq I_a\}$ for every left ideal $I$ 
and $\mathcal{I}(Z)=\bigcap_{a \in Z} I_a$ for every set of central points $Z$,
we have $\sqrt{I} = \bigcap_{I \subseteq I_a} I_a$. 
 Theorem \ref{thma} now implies that $\sqrt{I}$
is the intersection of all maximal left ideals that contain $I$.

Recall that the ring $\HH[x_1,\ldots,x_d]$ is left Noetherian; see \cite[$\S$1.2.9]{mr}.
 If a left ideal $I$ of $\HH[x_1,\ldots,x_d]$ is generated by  $g_1,\ldots,g_m$
then $\sqrt{I}$ consists of all $f$ such that for every $a \in \HH^d_c$ satisfying
$g_1(a)=\ldots=g_m(a)=0$ we have $f(a)=0$.

In 2021, G. Alon and E. Paran proved that $\sqrt{I}$ is equal to the intersection
of all completely prime left ideals that contain $I$; see  \cite[Theorem 1.2]{ap1}.
We recall their result in claim (2) of Theorem \ref{thmb}.
They used the definition of a completely prime left ideal by M. L. Reyes \cite{reyes};
we will recall it  in Section \ref{sec3}.

In 2024, M. Aryapoor gave an explicit characterization of $\sqrt{I}$; see \cite[Theorem 1.5]{aya}.
We will recall his result in claim (1) of Theorem \ref{thmb}.

We will give two new characterizations of $\sqrt{I}$. By Theorem \ref{thm2}, 
$\sqrt{I}$ is equal to the intersection of all prime left ideals that contain $I$.
By Theorems \ref{thm1} and \ref{thm2}, $\sqrt{I}$ is the smallest semiprime left ideal containing $I$.
These  characterizations also appear as claims (3) and (4) of Theorem \ref{thmb}. 
We use the definitions of a prime and a semiprime left ideal by F. Hansen \cite{hansen};
we recall them in Section \ref{sec1}.

\begin{thm}
\label{thmb}
For every left ideal $I$ of $\HH[x_1,\ldots,x_d]$,  $\sqrt{I}$ is equal to
\begin{enumerate}
\item the set of all $f \in \HH[x_1,\ldots,x_d]$ such that for every $b \in \HH$ there exists $N \in \NN_0$ satisfying 
$(bf)^N \in I+I(bf)+\ldots+I(bf)^N.$
\item  the intersection of all completely prime left ideals that contain~$I$.
\item  the intersection of all  prime left ideals that contain $I$.
\item the smallest semiprime left ideal that contains $I$.
\end{enumerate}
\end{thm}

We will also discuss the analogues of Theorems \ref{thma} and \ref{thmb} for matrix polynomials.
Let $\acf$ be an algebraically closed field, $\acf[x_1,\ldots,x_d]$ the usual ring of polynomials 
and  $M_n(\acf[x_1,\ldots,x_d])$ the ring of  $n \times n$ matrix polynomials.
A \textit{directional point} is a pair $(a,v)$ where $a \in \acf^d$ is a usual point and $v \in \acf^n$
is a direction (i.e. a nonzero vector). The value of a $n \times n$ matrix polynomial $F$ in 
a directional point $(a,v)$ is defined as the vector $F(a)v \in \acf^n$.
For every directional point $(a,v)$, the set
\begin{equation}
\label{def5}
D(a,v):=\{F \in M_n(\acf[x_1,\ldots,x_d]) \mid F(a)v=0\}
\end{equation}
of all matrix polynomials whose value at $(a,v)$ is zero is a left ideal.

Theorem \ref{thmc} is a weak nullstellensatz for matrix polynomials.
It is a special case of \cite[Theorem 1.2]{stone}. For an
alternative proof see \cite[Proposition 2 and Remark 1]{cimpric}. 

\begin{thm}
\label{thmc}
For every directional point $(a,v)$, the left ideal $D(a,v)$ is maximal. Moreover, every maximal left
ideal of  $M_n(\acf[x_1,\ldots,x_d])$ is of this form. In particular, for every proper left ideal $I$
of  $M_n(\acf[x_1,\ldots,x_d])$ there exists a directional point $(a,v)$ such that $F(a)v=0$
for all $F \in I$.
\end{thm}

For every left ideal $I$ of $M_n(\acf[x_1,\ldots,x_d])$ we define its \textit{radical} $\sqrt{I}$ 
as the set of all $F \in M_n(\acf[x_1,\ldots,x_d])$ which annihilate all directional points 
that are annihilated by all elements of $I$. 

By definition, $\sqrt{I}$ is equal to the intersection of all left ideals of the form $D(a,v)$ which contain $I$.
By Theorem \ref{thmc} it follows that $\sqrt{I}$ is the intersection of  all maximal left ideals that contain~$I$.

As $M_n(\acf[x_1,\ldots,x_d])$ is left Noetherian by \cite[$\S$1.1.2]{mr}, 
every left ideal $I$ of $M_n(\acf[x_1,\ldots,x_d])$ is generated by some $G_1,\ldots,G_m$. 
An element  $F$ belongs to $\sqrt{I}$ iff  for every $(a,v)$ satisfying $G_1(a)v=\ldots=G_m(a)v=0$ we have $F(a)v=0$.
 
In 2022 we proved that for every left ideal $I$ of $M_n(\acf[x_1,\ldots,x_d])$, $\sqrt{I}$ 
is equal to the intersection of all prime left ideals that contain $I$ and it is 
also equal to the smallest semiprime left ideal containing $I$; see \cite[Theorem 3]{cimpric}.
We recall these results as claims (3) and (4) of Theorem \ref{thmd}.

We will also give two new characterizations of $\sqrt{I}$ for matrix polynomials.
By Corollary \ref{cor3}, $\sqrt{I}$ is the intersection of all completely prime left ideals containing $I$.
Theorem \ref{thm4} gives an explicit characterization of $\sqrt{I}$ which is similar
to the result \cite[Theorem 1.5]{aya} of M. Aryapoor for quaternionic polynomials.
These characterizations also appear as claims (2) and (1) of Theorem \ref{thmd}.

\begin{thm}
\label{thmd}
For every left ideal $I$ of $M_n(\acf[x_1,\ldots,x_d])$, $\sqrt{I}$ is~equal~to
\begin{enumerate}
\item the set of all $F \in M_n(\acf[x_1,\ldots,x_d])$ such that for every ${A \in M_n(\acf)}$ there is $N \in \NN_0$ satisfying
$(AF)^N \in I+I(AF)+\ldots+I(AF)^N.$
\item the intersection of all completely prime left ideals that contain~$I$.
\item the intersection of all  prime left ideals that contain $I$.
\item the smallest semiprime left ideal that contains $I$.
\end{enumerate}
\end{thm}

Another description of the radical of a left ideal $I$ in $\HH[x_1,\ldots,x_d]$ is discussed in \cite{slice}, \cite{ap2}, \cite{ap3}.  They show that $\sqrt{I}$ consists of all quaternionic polynomials that annihilate all points from $\HH^d$ (not just $\HH^d_c$!) that are annihilated by all elements of $I$. We do not know if this result has a parallel for matrix polynomials.

\section{Prime and semiprime left ideals and submodules}
\label{sec1}

Let $A$ be an associative unital ring and let $I$ be a left ideal of $A$.
We say that $I$ is \textit{prime} iff for every $a,b \in A$ such that $aAb \subseteq I$ we have $a  \in A$ or $b \in A$.
We say that $I$ is \textit{semiprime} if for every $a \in A$ such that $aAa \subseteq I$ we have  $a \in I$.
Both definitions are from \cite{hansen}. For two-sided ideals they coincide with the usual definitions.

Let $R$ be a commutative ring and let $\HH_R$ be the standard quaternion $R$-algebra, i.e.
$$\hr=\{r_0+r_1 \ii +r_2 \jj +r_3 \kk  \mid r_0,r_1,r_2,r_3 \in R\}$$
where $\ii ^2=\jj ^2=\kk ^2=\ii  \jj  \kk =-1$ and $\ii ,\jj ,\kk $ commute with $R$. 
For every $a=a_0+a_1 \ii+a_2 \jj+a_3 \kk \in \hr$ we write $\bar{a}=a_0-a_1 \ii-a_2 \jj-a_3 \kk$.
Note that $\hr=R^4$ as an $R$-module. Note also that
$\HH_{\RR[x_1,\ldots,x_d]}=\HH[x_1,\ldots,x_d]$.

The aim of this section is to prove the following result.

\begin{thm} 
\label{thm1}
Suppose that $\frac12 \in R$. A left  ideal of $\hr$ is semiprime iff it is an intersection of prime left ideals.
\end{thm}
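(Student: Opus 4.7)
The ``if'' direction is immediate from the definitions: if $J = \bigcap_\alpha P_\alpha$ is an intersection of prime left ideals of $\hr$ and $a \hr a \subseteq J$, then $a \hr a \subseteq P_\alpha$ for each $\alpha$, so by primeness $a \in P_\alpha$; intersecting yields $a \in J$. Thus $J$ is semiprime. For the converse, it suffices to show that whenever $I$ is a semiprime left ideal and $a \notin I$ there is a prime left ideal $P \supseteq I$ with $a \notin P$; the equality $I = \bigcap\{P : I \subseteq P,\ P \text{ prime}\}$ then follows immediately, since the reverse inclusion is the already-established easy direction.

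The plan is the standard Zorn's lemma argument from the classical two-sided theory, transcribed to the one-sided setting. Semiprimeness of $I$ amounts to the statement that whenever $x \notin I$ one has $x \hr x \not\subseteq I$. Starting from $a_0 := a$, inductively pick $r_k \in \hr$ with $a_{k+1} := a_k r_k a_k \notin I$; call the resulting ``n-sequence'' $N = \{a_k\}_{k \geq 0}$. By Zorn's lemma, choose a left ideal $P \supseteq I$ maximal subject to $P \cap N = \emptyset$. Since $a \in N$, already $a \notin P$, and everything reduces to showing that $P$ is prime.

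Verifying this primeness is the main obstacle, and it is the step for which the hypothesis $\tfrac12 \in R$ is needed. Suppose for contradiction that $b\hr c \subseteq P$ with $b, c \notin P$. Maximality forces $P + \hr b$ and $P + \hr c$ each to meet $N$, producing $p_1 + rb = a_m$ and $p_2 + sc = a_n$ with $p_i \in P$ and $r, s \in \hr$. The natural strategy is to multiply these along the n-sequence to construct an element of $N$ which, once expanded, should lie in $P$. Most terms fall into $P$ automatically (by the left-ideal property or by $b \hr c \subseteq P$); the stubborn cross terms are of the form $p_1 \cdot q \cdot (sc)$, which lie in $P \cdot \hr \cdot c$ but not evidently in $P$ because $P$ is only one-sided. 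To dispose of them we exploit the anti-involution $q \mapsto \bar q$ of $\hr$: the identities $q + \bar q \in R$ and $q\bar q \in R$ (both central), together with $\tfrac12 \in R$, let us write $\bar q = 2\,\re(q) - q$, so that a right-multiplication by $q$ can be converted into a left-multiplication by $-q$ up to a central scalar. Applying this substitution rewrites each obstructive cross term modulo $P$ as an element visibly in $P$, producing an element of $N$ that lies in $P$ and the desired contradiction. A conceivable alternative, also available because of $\tfrac12 \in R$, is to tensor with $R[\ii]$ to split $\hr$ as $M_2(R[\ii])$, prove the analog for left ideals of $M_2(R[\ii])$ where the module-theoretic picture is transparent, and descend by Galois; I expect the direct Zorn argument above to be cleaner.
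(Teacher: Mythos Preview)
Your Zorn's-lemma outline is the right shape, but the step you flag as ``the main obstacle'' really is one, and the anti-involution does not clear it. Concretely: with $P$ a left ideal maximal for missing the $n$-sequence, and $b\hr c\subseteq P$ with $b,c\notin P$, you obtain $a_k=p_1+rb=p_2+sc$ for some large $k$ (using that any left ideal containing some $a_k$ contains all later ones). Expanding $a_{k+1}=(p_1+rb)r_k(p_2+sc)$, three of the four terms lie in $P$ for free; the stubborn one is $p_1 r_k sc\in P\cdot\hr$. Your proposed fix is to use $\bar q=2\,\re(q)-q$ to turn right-multiplication by $q$ on $p_1$ into left-multiplication up to a central scalar. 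But $p_1q$ is not congruent to $-qp_1$ modulo a central multiple of $p_1$: the actual identity is
\[
p_1q=-qp_1+2\,\re(q)\,p_1+2\,\re(p_1)\,q-2\,\re(p_1q),
\]
and the last two summands involve the \emph{scalar parts} $\re(p_1),\re(p_1q)\in R$, which need not lie in $P$. So modulo $P$ you have only traded $p_1qsc$ for $2\,\re(p_1)\,qsc-2\,\re(p_1q)\,sc$, which is no easier. Indeed, if you instead substitute $p_1=a_k-rb$ and chase, the cross term reduces modulo $P$ to $a_{k+1}$ itself, and the computation collapses to a tautology. In short, the one-sidedness of $P$ is a genuine obstruction that the bar involution alone does not remove; the argument as written does not prove that $P$ is prime.

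The paper's route is different in kind and explains where $\tfrac12\in R$ is actually used. One first shows (using the component-extraction formulas $a_0=\tfrac14(a-\ii a\ii-\jj a\jj-\kk a\kk)$, etc., which require $\tfrac12$) that a left ideal of $\hr$ is prime, respectively semiprime, if and only if it is prime, respectively semiprime, as an $R$-submodule of $\hr\cong R^4$. The known module-theoretic fact that every semiprime submodule of $R^n$ is an intersection of prime submodules then applies. Finally, for each prime submodule $N\supseteq I$ one checks that $N\cap\ii N\cap\jj N\cap\kk N$ is a prime \emph{left ideal} sandwiched between $I$ and $N$, so $I$ is an intersection of prime left ideals. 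Your alternative suggestion of passing to $M_2(R[\ii])$ is in the same spirit (reduce to a module picture), but the direct $R^4$-module argument is what makes the proof go through; the bare Zorn argument does not.
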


\begin{rem}
\label{rem00}
We need $\frac12 \in R$ because we use the well-known identities
\begin{eqnarray*}
a_0=\frac14(a-\ii a \ii-\jj a \jj-\kk a \kk)\\
a_1=\frac14(\jj a \kk - a \ii-\ii a-\kk a \jj) \\
a_2=\frac14(\kk a \ii-a \jj-\jj a-\ii a \kk) \\
a_3=\frac14(\ii a \jj-a \kk-\kk a-\jj a \ii)
\end{eqnarray*}
where  $a=a_0+a_1 \ii+a_2 \jj+a_3 \kk$. 
\end{rem}

We will split the proof into several lemmas. 
We start with useful characterizations of prime and semiprime left ideals in $\HH_R$.
Recall the following  definitions of  prime and semiprime submodules of $R^n$ from \cite{cimpric}.
We say that a submodule $N$  of $R^n$ is \textit{prime} if for every $r\in R$ and $a \in R^n$
such that $ra \in N$ we have either $rR^n \subseteq N$ or $a \in N$. We say that a submodule 
$N$ of $R^n$ is \textit{semiprime} if for every $a=(a_1,\ldots,a_n) \in R^n$ such that $a_i a\in N$
for all $i$ we have  $a \in N$.

\begin{lem}
\label{lem1}
For every left ideal $I$ of $\hr$ the following are equivalent.
\begin{enumerate}
\item $I$ is  prime as a left ideal.
\item For every $r \in R$ and $a \in \hr$ such that $ra \in I$ we have  $r \in I$ or $a \in I$
(i.e. $I$ is prime as a submodule of $\hr$.)
\end{enumerate}
\end{lem}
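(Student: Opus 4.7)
The plan is to dispatch $(1)\Rightarrow(2)$ with a short computation exploiting the centrality of $R$, and to prove $(2)\Rightarrow(1)$ by applying the identities of Remark \ref{rem00} coefficient by coefficient after right-multiplication by $b$.

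For $(1)\Rightarrow(2)$, given $r\in R$ and $a\in\hr$ with $ra\in I$, centrality of $r$ gives $rxa=xra\in I$ for every $x\in\hr$ (the inclusion uses the left-ideal property), so $r\hr a\subseteq I$. Hansen's primeness of $I$ as a left ideal applied to the pair $(r,a)$ then yields $r\in I$ or $a\in I$.

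For $(2)\Rightarrow(1)$, suppose $a\hr b\subseteq I$ with $b\notin I$, and write $a=a_0+a_1\ii+a_2\jj+a_3\kk$ with $a_j\in R$; I aim to show $a\in I$. The key preliminary observation is that $\xi\cdot a\cdot\eta\cdot b\in I$ for every $\xi,\eta\in\{1,\ii,\jj,\kk\}$: indeed $a\eta b\in a\hr b\subseteq I$, and hence $\xi(a\eta b)\in\hr\cdot I\subseteq I$ because $I$ is a left ideal. Multiplying each of the four identities of Remark \ref{rem00} on the right by $b$ therefore exhibits $4a_jb$ as a $\ZZ$-linear combination of such elements, so $4a_jb\in I$ and then $a_jb\in I$ for $j=0,1,2,3$, where the final step uses $\frac{1}{4}\in R$. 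Applying hypothesis (2) to each relation $a_jb\in I$, together with $b\notin I$, forces $a_j\in I$, hence $a_j\in I\cap R$, for every $j$. Since $I$ is a left ideal containing each $a_j$, it also contains $a_j\ii$, $a_j\jj$, and $a_j\kk$, and therefore $a=a_0+a_1\ii+a_2\jj+a_3\kk\in I$, as required.

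The only non-routine ingredient is the appeal to Remark \ref{rem00} in the second direction, which is precisely where the assumption $\frac{1}{2}\in R$ is essential. The remaining steps are formal bookkeeping exploiting that $R$ sits in the centre of $\hr$ and that $I$ is closed under left multiplication by $\hr$.
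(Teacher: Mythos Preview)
Your proof is correct and follows essentially the same approach as the paper: both directions match the paper's argument, with the easy direction using centrality of $R$ to pass from $ra\in I$ to $r\hr a\subseteq I$, and the harder direction using the identities of Remark~\ref{rem00} (equivalently, $a_j\in\sum\hr a\hr$) to deduce $a_jb\in I$ from $a\hr b\subseteq I$. Your write-up is somewhat more explicit about the bookkeeping, but there is no substantive difference in strategy.
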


\begin{proof}
Suppose that (1) is true and pick any $r \in R$ and $a \in \hr$ such that $ra \in I$. It follows that $r \hr a \subseteq I$ which implies that either $r \in I$ or $a \in I$. So (2) is true.

Conversely, suppose that (2) is true. Pick any $a,b \in \hr$ such that $a \hr b \subseteq I$. It follows
that $(\sum \hr a \hr) b \subseteq I$. By Remark \ref{rem00} we have $a_i b \in I$ for $i=0,1,2,3$. 
If $b \not\in I$ then, by (2), $a_i \in I$ for all $i$. Since $I$ is a left ideal, it follows that $a \in I$.
Thus, (1) is true.
\end{proof}

A similar characterization also exists  for semiprime ideals.

\begin{lem}
\label{lem2}
For every left ideal $I$ of $\hr$ the following are equivalent.
\begin{enumerate}
\item $I$ is semiprime as a left ideal.
\item For every  $a=a_0+a_1 \ii +a_2 \jj +a_3 \kk  \in \hr$ such that $a_i a \in I$ for all $i$
we have $a \in I$  (i.e. $I$ is semiprime as a submodule of $\hr$.)
\end{enumerate}
\end{lem}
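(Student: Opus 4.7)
The plan is to mimic the proof of Lemma \ref{lem1}, with Remark \ref{rem00} playing the role that centrality of $R$ already played there.

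For the direction $(1) \Rightarrow (2)$, I would start from the hypothesis $a_i a \in I$ for $i=0,1,2,3$ and try to reach $a\hr a \subseteq I$, so that semiprimeness of $I$ gives $a \in I$. Since each $a_i$ lies in $R$, it commutes with $\ii,\jj,\kk$ and with any $r \in \hr$. Hence for an arbitrary $r \in \hr$,
\begin{equation*}
ara = (a_0 + a_1\ii + a_2\jj + a_3\kk)\, r\, a = a_0\,ra + \ii\,(a_1 ra) + \jj\,(a_2 ra) + \kk\,(a_3 ra).
\end{equation*}
Because $I$ is a left ideal and $a_i a \in I$, each summand $a_i r a = (a_i\, r)\,(a_i\, a)/a_i$-type manipulation is replaced by the cleaner observation that $a_i(ra) = (a_i r) a$ and $a_i a \in I$ gives $r(a_i a) = a_i(ra) \in I$, so the corresponding $\ii$-, $\jj$-, $\kk$-scalings stay in $I$. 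Summing, $ara \in I$ for every $r$, hence $a\hr a \subseteq I$, and semiprimeness yields $a \in I$.

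For the converse $(2) \Rightarrow (1)$, I would pick $a \in \hr$ with $a\hr a \subseteq I$ and try to deduce that $a_i a \in I$ for each $i$, so that (2) delivers $a \in I$. Here is where Remark \ref{rem00} enters: multiplying the identities of that remark on the right by $a$ gives, for example,
\begin{equation*}
a_0 a = \tfrac14\bigl(a\cdot 1\cdot a \;-\; \ii(a\ii a) \;-\; \jj(a\jj a) \;-\; \kk(a\kk a)\bigr),
\end{equation*}
together with three analogous expressions for $a_1 a$, $a_2 a$, $a_3 a$. Since $a\hr a \subseteq I$ each of the terms $a\cdot 1 \cdot a$, $a\ii a$, $a\jj a$, $a\kk a$ belongs to $I$, and multiplying by $\ii,\jj,\kk$ on the left keeps them in the left ideal $I$. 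Thus $a_i a \in I$ for $i=0,1,2,3$, and (2) finishes the proof.

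There is no real obstacle; the only subtle point is the same one that forced the hypothesis $\tfrac12 \in R$ in Theorem \ref{thm1}, namely the use of the $\tfrac14$ in the component-extraction identities of Remark \ref{rem00} in the $(2) \Rightarrow (1)$ direction. One should also be careful in the $(1) \Rightarrow (2)$ direction to write the expansion of $ara$ so that the coefficients $a_i \in R$ are pulled past $\ii,\jj,\kk$ and $r$ using only that they are central in $\hr$, which reduces everything to left-ideal closure applied to $a_i a \in I$.
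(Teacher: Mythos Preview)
Your argument is correct and is essentially the same as the paper's proof: in both directions you use centrality of the $a_i$ together with Remark~\ref{rem00} exactly as the paper does, the paper merely phrasing the $(2)\Rightarrow(1)$ step as ``$(\sum \hr a \hr)a \subseteq I$'' rather than writing out the $a_0 a$ formula explicitly. The only cosmetic issue is the confusing ``$(a_i r)(a_i a)/a_i$-type manipulation'' aside in your $(1)\Rightarrow(2)$ paragraph, which you should simply delete since the clean argument you give afterward is all that is needed.
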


\begin{proof}
Suppose that (1) is true and pick any  $a \in \hr$ such that $a_i a \in I$ for all $i$.
 It follows that $a_i \hr a \subseteq I$ which implies that $a \hr a \subseteq I$.
By assumption, it follows that $a \in I$. This proves (2).

Suppose now that (2) is true.. Pick any $a \in \hr$ such that $a \hr a \subseteq I$. It follows
that $(\sum \hr a \hr) a \subseteq I$. By Remark \ref{rem00}, we have $a_i a \in I$ for all $i$.
It follows by (2) that $a \in I$. So, (1) is true.
\end{proof}

We will also need the following technical lemma.

\begin{lem}
\label{lem3}
Let $I$ be a left ideal of $\hr$ and let $N$ be a prime submodule of $\hr$ containing $I$.
Then the set  $N \cap \ii  N \cap \jj  N \cap \kk  N$ is a prime left ideal of $\hr$ containing $I$.
\end{lem}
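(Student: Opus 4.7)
The plan is to verify in turn that $M := N \cap \ii N \cap \jj N \cap \kk N$ contains $I$, is a left ideal of $\hr$, and is prime, reducing each claim to the corresponding property of $N$ via the quaternion multiplication table. The main step will be primality; the first two properties are essentially bookkeeping, and I expect no deeper obstacle.

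Containment of $I$ should be immediate because $I$ is a left ideal: for $f \in I$ and any $u \in \{1,\ii,\jj,\kk\}$ the element $u^{-1}f$ lies in $I \subseteq N$, so $f = u(u^{-1}f) \in uN$. To see that $M$ is a left ideal, since $M$ is visibly an $R$-submodule it suffices to check $uM \subseteq M$ for $u \in \{\ii,\jj,\kk\}$. Given such a $u$ and $m \in M$, I would derive the four inclusions $um \in vN$ for $v \in \{1,\ii,\jj,\kk\}$ separately from the four memberships of $m$, using that $uv^{-1}$ is $\pm$ a basis quaternion; for instance $\ii\kk = -\jj$ yields $\ii(\kk N) = -\jj N = \jj N$ because $N$ is closed under negation.

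The key step is primality of $M$. By Lemma \ref{lem1} it suffices to show that $M$ is prime as an $R$-submodule of $\hr$. Given $r \in R$ and $a \in \hr$ with $ra \in M$, I would rewrite the four conditions $ra \in uN$ ($u \in \{1,\ii,\jj,\kk\}$) as $r(u^{-1}a) \in N$ and apply primality of $N$ to each. In each case the resulting dichotomy is ``$r\hr \subseteq N$'' or ``$u^{-1}a \in N$''. If the first alternative holds for some $u$, then because $r$ is central in $\hr$ and each $v \in \{\ii,\jj,\kk\}$ is a two-sided unit, $r\hr \subseteq N$ upgrades to $r\hr \subseteq vN$ for every $v$ (indeed $r\hr = r(v^{-1}\hr) = v^{-1}(r\hr) \subseteq v^{-1}N$, hence $r\hr \subseteq vN$ after multiplying both sides by $v$), so $r\hr \subseteq M$ and in particular $r \in M$. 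Otherwise the second alternative holds for every $u$, which says exactly $a \in M$. Either way the submodule primality condition is verified, and Lemma \ref{lem1} converts this back into primality of $M$ as a left ideal.
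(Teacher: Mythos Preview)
Your argument is correct and follows essentially the same route as the paper's proof: verify that $M$ contains $I$, is closed under left multiplication by $\ii,\jj,\kk$ (hence is a left ideal), and is prime as an $R$-submodule, then invoke Lemma~\ref{lem1}. One cosmetic point: your parenthetical justification ``after multiplying both sides by $v$'' for $r\hr \subseteq vN$ does not quite parse, but the conclusion is correct since $v^{-1}N = vN$ for $v \in \{\ii,\jj,\kk\}$ (because $v^2=-1$ and $N$ is closed under negation), or more directly because $r\hr = v(r\hr) \subseteq vN$.
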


\begin{proof}
Since $N$ is a submodule of $\hr$,  the sets $\ii  N$, $\jj  N$ and $\kk  N$ are also submodules of $\hr$.
Note that the submodule $$J:= N \cap \ii  N \cap \jj  N \cap \kk  N$$
satisfies $\ii J \subseteq J$, $\jj J \subseteq J$ and $\kk J \subseteq J$
which implies that $J$ is a left ideal of $\hr$. Note also that $I=\ii I=\jj I=\kk I$ since $I$ is a left ideals.
Therefore, $I \subseteq N$ implies $I \subseteq J$.

Suppose now that $N$ is a prime submodule of $\hr$. To show that $J$ is prime as a left ideal
it suffices by Lemma \ref{lem1} to show that it is prime as a submodule of $\hr$.
Pick $r \in R$ and $a \in \hr$ such that $r a \in J$ and $a \not\in J$. We consider four cases.
If $a \not\in N$ then $ra \in N$ implies that $r \hr \subseteq N$ by the definition of a prime submodule. 
In particular, $r,r \ii, r \jj,r \kk \in N$ which implies $r \in J$. 
If $a \not\in \ii N$ then $r a \in \ii N$ also implies  that $r \hr \subseteq N$ 
(since  $\ii a \not\in N$ and $r(\ii a) \in N$).
So, $r \in J$ in this case, too.
The remaining cases $a \not\in \jj N$ and $a \not\in \kk N$ are analogous.
\end{proof}

We are now ready for the proof of Theorem \ref{thm1}.

\begin{proof}
Let $I$ be a semiprime left ideal of $\hr$. By Lemma \ref{lem2}, $I$ is a semiprime submodule of $\hr$.
By  \cite[Theorem 1]{cimpric}, $I$ is an intersection of prime submodules of $\hr$.
For every prime submodule $N$ containing $I$ there exists by Lemma \ref{lem3} 
a prime left ideal between $I$ and $N$. It follows that $I$ is an intersection of prime left ideals.
The converse is clear.
\end{proof}

\section{Maximal left ideals}
\label{sec2}

We continue to assume that $R$ is a commutative ring containing $\frac12$.
In this section we also assume that the ring $R$ is Jacobson, i.e. every prime ideal of $R$ is an intersection 
of maximal ideals. We want to show that in this case the ring $\hr$ is  ``left Jacobson\rq\rq, i.e. every prime left ideal
of $\hr$ is an intersection of maximal left ideals.

\begin{thm}
\label{thm2}
If $R$ is a Jacobson ring containing $\frac12$ then every prime left ideal of $\hr$ is equal to an intersection
of maximal left ideals.
\end{thm}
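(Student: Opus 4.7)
The plan mimics the strategy of Theorem~\ref{thm1}: reduce via Lemma~\ref{lem1} to a module-theoretic question, exploit the Jacobson hypothesis on $R$ to pass to the quotient by a suitable maximal ideal, and then use the semisimplicity of the quaternion algebra over a field of characteristic $\ne 2$. Fix a prime left ideal $I$ of $\hr$ and an element $x \in \hr \setminus I$; it suffices to produce a maximal left ideal $M$ of $\hr$ with $I \subseteq M$ and $x \notin M$. Let $\mathfrak{p} := I \cap R = \{r \in R : r\hr \subseteq I\}$. Using Lemma~\ref{lem1}, one checks that $\mathfrak{p}$ is a prime ideal of $R$ with $\mathfrak{p}\hr \subseteq I$, so $\hr/I$ becomes a finitely generated faithful $R/\mathfrak{p}$-module; more importantly, the primeness of $I$ as a submodule immediately forces $\hr/I$ to be \emph{torsion-free} over the domain $R/\mathfrak{p}$.

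The key step is to find a maximal ideal $\mathfrak{m} \supseteq \mathfrak{p}$ of $R$ with $\bar{x} := x + I \notin \mathfrak{m}(\hr/I)$, equivalently $x \notin I + \mathfrak{m}\hr$. Working over the Jacobson domain $R/\mathfrak{p}$ with fraction field $L$, pick an $L$-basis of $(\hr/I) \otimes_{R/\mathfrak{p}} L$ and clear denominators of a finite generating set of $\hr/I$ to produce a nonzero $s \in R/\mathfrak{p}$ with $s(\hr/I) \subseteq (R/\mathfrak{p})^d$ for some $d$. If $y \in \hr/I$ were to lie in $\mathfrak{m}(\hr/I)$ for every maximal $\mathfrak{m} \supseteq \mathfrak{p}$, then $sy \in \bigcap_{\mathfrak{m}} \mathfrak{m}(R/\mathfrak{p})^d = \bigl(\bigcap_{\mathfrak{m}} \mathfrak{m}\bigr)^d$, which is zero because $R/\mathfrak{p}$ is a Jacobson domain (its Jacobson radical equals its nilradical, which vanishes). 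Torsion-freeness of $\hr/I$ over $R/\mathfrak{p}$ together with $s \ne 0$ then give $y = 0$. Applied to the nonzero coset $\bar{x}$, this produces the required $\mathfrak{m}$.

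With $K := R/\mathfrak{m}$ one has $\operatorname{char} K \ne 2$ (since $\frac{1}{2} \in R$), and the canonical surjection $\pi \colon \hr \twoheadrightarrow \HH_K$ has kernel $\mathfrak{m}\hr$; by the previous paragraph $\pi(I)$ is a proper left ideal of $\HH_K$ not containing $\pi(x)$. Since $\operatorname{char} K \ne 2$, the $4$-dimensional $K$-algebra $\HH_K$ is semisimple Artinian (either a division algebra over $K$ or $M_2(K)$), so every proper left ideal is the intersection of the maximal left ideals containing it; hence some maximal left ideal $\bar{M}$ of $\HH_K$ contains $\pi(I)$ but not $\pi(x)$. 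The preimage $M := \pi^{-1}(\bar{M})$ is a left ideal of $\hr$ containing $I$ and missing $x$, and since $\hr/M \cong \HH_K/\bar{M}$ is simple, $M$ is maximal. The main obstacle is the middle paragraph, where the combination of torsion-freeness (coming from Lemma~\ref{lem1}) with the Jacobson hypothesis on $R$ is used essentially; the remaining ingredients --- reduction modulo $\mathfrak{m}$, the semisimplicity of $\HH_K$, and the routine lifting of maximal left ideals through a surjective ring homomorphism --- are classical.
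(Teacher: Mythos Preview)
Your argument is correct and takes a genuinely different route from the paper.  The paper proves Theorem~\ref{thm2} via a case distinction: letting $J$ be the intersection of all maximal left ideals containing $I$, it first shows $I\cap R=J\cap R$ (Lemma~\ref{lem4} and Corollary~\ref{cor1}), and then concludes $I=J$ in two separate ways --- when $I$ is not two-sided by a quaternion-specific trick using conjugation and the norm identity $\bar I J\subseteq I$ (Lemma~\ref{lem5}), and when $I$ is two-sided via the bijection between ideals of $R$ and two-sided ideals of $\hr$ together with semisimplicity of $\hr/M$ for maximal two-sided $M$ (Lemma~\ref{lem6}).

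Your proof is uniform: you use Lemma~\ref{lem1} only to extract that $\hr/I$ is a finitely generated torsion-free module over the Jacobson domain $R/\mathfrak p$, then embed it up to a scalar into a free module to show $\bigcap_{\mathfrak m\supseteq\mathfrak p}\mathfrak m\,(\hr/I)=0$, and finally pass to $\HH_K$ for $K=R/\mathfrak m$ and invoke that a quaternion algebra over a field of characteristic $\ne 2$ is simple Artinian.  This avoids the two-sided/not-two-sided split and the norm computation entirely, at the cost of a short commutative-algebra detour (the embedding $s\,(\hr/I)\hookrightarrow (R/\mathfrak p)^d$).  The paper's argument, in contrast, stays closer to the explicit arithmetic of $\hr$ and never needs the structure theory of quaternion algebras over arbitrary fields; it also isolates Lemma~\ref{lem4} (properness of $I+\hr\mathfrak m$), which your approach obtains as a byproduct of the stronger statement $x\notin I+\hr\mathfrak m$.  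Both methods rely on $\tfrac12\in R$ in essentially the same place: you need $\operatorname{char}K\ne 2$ for $\HH_K$ to be semisimple, while the paper needs it (via Remark~\ref{rem00}) for Lemma~\ref{lem1} and for the ideal correspondence in Remark~\ref{rem0}.
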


Since the ring $\RR[x_1,\ldots,x_d]$ is Jacobson by \cite[Theorem 3]{goldman}, Theorem \ref{thm2} implies that
the ring $\HH[x_1,\ldots,x_d]=\HH_{\RR[x_1,\ldots,x_d]}$ is  ``left Jacobson\rq\rq. 

We will split the proof of Theorem \ref{thm2} into several lemmas.

\begin{lem}
\label{lem4}
Let $I$ be a left ideal of $\hr$ and let $\mathfrak{m}$ be a maximal ideal of $R$ such that $I \cap R \subseteq \mathfrak{m}$.
There exists a maximal left ideal $M$ of $\hr$ such that $I \subseteq M$ and $M \cap R=\mathfrak{m}$.
\end{lem}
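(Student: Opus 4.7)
The key reduction is to show that $J := I + \mathfrak{m}\hr$ is a proper left ideal of $\hr$. Once this is established, Zorn's lemma (applied to the set of proper left ideals of the unital ring $\hr$ containing $J$) produces a maximal left ideal $M$ of $\hr$ with $J \subseteq M$. Then $I \subseteq M$ and $\mathfrak{m}\hr \subseteq M$, so $M \cap R$ is a proper ideal of $R$ (since $1 \notin M$) that contains $\mathfrak{m}$; by maximality of $\mathfrak{m}$, we conclude $M \cap R = \mathfrak{m}$.

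The main obstacle is therefore to prove that $I + \mathfrak{m}\hr \ne \hr$. A tempting but unsuccessful approach is to write $1 = i + m$ with $i \in I$ and $m \in \mathfrak{m}\hr$, and then try to extract scalar parts using the identities in Remark \ref{rem00}; this fails because the conjugates $\ii i \ii$, $\jj i \jj$, $\kk i \kk$ involve right multiplication by $\ii,\jj,\kk$ and need not belong to the left ideal $I$.

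The correct route is via Nakayama's lemma. Since $\hr = R \oplus R\ii \oplus R\jj \oplus R\kk$ is free of rank $4$ as an $R$-module, the quotient $\hr/I$ is a finitely generated $R$-module. Suppose for contradiction that $I + \mathfrak{m}\hr = \hr$; this means $\mathfrak{m}(\hr/I) = \hr/I$. Applying the determinant (Cayley--Hamilton) trick to a finite set of $R$-module generators of $\hr/I$, one obtains an element $s \in R$ with $s \equiv 1 \pmod{\mathfrak{m}}$ and $s\cdot(\hr/I) = 0$. In particular, taking the class of $1 \in \hr$, we get $s = s\cdot 1 \in I$, hence $s \in I \cap R$; but $s \notin \mathfrak{m}$ since $\mathfrak{m}$ is proper, contradicting the hypothesis $I \cap R \subseteq \mathfrak{m}$. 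This establishes $I + \mathfrak{m}\hr \ne \hr$ and completes the argument.
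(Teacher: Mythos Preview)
Your Nakayama argument is correct and complete: since $\hr$ is free of rank $4$ over $R$, the quotient $\hr/I$ is finitely generated, and the Cayley--Hamilton form of Nakayama produces $s \in (1+\mathfrak{m}) \cap I \cap R$, contradicting $I \cap R \subseteq \mathfrak{m}$. The deduction of $M \cap R = \mathfrak{m}$ from properness of $J$ is also fine.

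The paper, however, does \emph{not} use Nakayama; it gives a direct, quaternion-specific argument that you dismissed too hastily. You are right that extracting the scalar part $c_0$ of $c \in I$ via the identities of Remark~\ref{rem00} fails for a one-sided ideal. But the paper instead uses the norm: writing $1 = c + z$ with $c \in I$ and $z \in \hr\mathfrak{m}$, one has $\bar{c}c = c_0^2 + c_1^2 + c_2^2 + c_3^2 \in I \cap R \subseteq \mathfrak{m}$ (this is a \emph{left} multiple of $c$, so it lies in $I$). Comparing components gives $c_1,c_2,c_3 \in \mathfrak{m}$ and $1 - c_0 \in \mathfrak{m}$, whence $c_0^2 \in \mathfrak{m}$, so $c_0 \in \mathfrak{m}$, forcing $1 \in \mathfrak{m}$.

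In short: your route via Nakayama is more general---it works verbatim for any $R$-algebra that is finitely generated as an $R$-module---while the paper's norm trick is more elementary and self-contained but relies on the specific structure of $\hr$.
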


\begin{proof}
If we can prove that the left ideal $L:=I+\hr \mathfrak{m}$ is proper then there exists a maximal left ideal $M$ of $\hr$
which contains $L$. By definition, $M$ also contains $I$ and $\mathfrak{m}$. Since $\mathfrak{m} \subseteq M \cap R$
and $\mathfrak{m}$ is maximal, it follows that $M \cap R=\mathfrak{m}$ as required.

Suppose that $L$ is not proper. Then there exist $c \in I$ and $z \in \hr \mathfrak{m}$ such that $1=c+z$.
Write $c=c_0+c_1 \ii+c_2 \jj+c_3 \kk$ and $z=z_0+z_1 \ii+z_2 \jj+z_3 \kk$. By comparing coefficients, we obtain
\begin{equation}
\label{eq1}
1=c_0+z_0, \quad 0=c_1+z_1, \quad 0=c_2+z_2, \quad 0=c_3+z_3.
\end{equation}
Since $c \in I$ and $I$ is a left ideal we have $\bar{c}c \in I$ implying
\begin{equation}
\label{eq2}
c_0^2+c_1^2+c_2^2+c_3^2 \in I \cap R \subseteq \mathfrak{m}.
\end{equation}
Since $z \in \hr \mathfrak{m}$  and $\mathfrak{m}$ is an ideal of $R$ we have 
\begin{equation}
\label{eq3}
z_0,z_1,z_2,z_3 \in \mathfrak{m}.
\end{equation}
By \eqref{eq1} and \eqref{eq3} we have $c_1,c_2,c_3 \in \mathfrak{m}$ and $1-c_0 \in \mathfrak{m}$. 
Now \eqref{eq2} implies that $c_0^2 \in \mathfrak{m}$ which implies that $c_0 \in \mathfrak{m}$.
It follows that $1=c_0+(1-c_0) \in \mathfrak{m}$, a contradiction. Therefore $L$ is proper.
\end{proof}

As a trivial application of Lemma \ref{lem4} and the Jacobson property of $R$ we obtain Corollary \ref{cor1}.

\begin{cor}
\label{cor1}
Let $I$ be a prime left ideal of $\hr$ and let $J$ be the intersection of all maximal left ideals of $\hr$ that contain $I$.
Then ${I \cap R}={J \cap R}$.
\end{cor}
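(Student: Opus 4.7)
The plan is to show both inclusions. The inclusion $I \cap R \subseteq J \cap R$ is immediate from $I \subseteq J$. The nontrivial direction is $J \cap R \subseteq I \cap R$, and I would attack it by relating $I \cap R$ to an intersection of maximal ideals of $R$ via the Jacobson hypothesis and then lifting those maximal ideals to maximal left ideals of $\hr$ via Lemma \ref{lem4}.

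First I would verify the intermediate claim that $I \cap R$ is a prime ideal of $R$. This should follow directly from Lemma \ref{lem1}: if $a, b \in R$ satisfy $ab \in I \cap R$, then for every $q \in \hr$ we have $aqb = q(ab) \in I$ because $a, b$ are central in $\hr$ and $I$ is a left ideal; hence $a \hr b \subseteq I$, so primality of $I$ as a left ideal forces $a \in I$ or $b \in I$, i.e., $a \in I \cap R$ or $b \in I \cap R$.

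Next, since $R$ is Jacobson and $I \cap R$ is prime,
\[
I \cap R \;=\; \bigcap \{\mathfrak{m} : \mathfrak{m} \text{ maximal ideal of } R,\ I \cap R \subseteq \mathfrak{m}\}.
\]
For each such $\mathfrak{m}$, Lemma \ref{lem4} yields a maximal left ideal $M_{\mathfrak{m}}$ of $\hr$ with $I \subseteq M_{\mathfrak{m}}$ and $M_{\mathfrak{m}} \cap R = \mathfrak{m}$. By definition of $J$ we then have $J \subseteq M_{\mathfrak{m}}$, and so $J \cap R \subseteq M_{\mathfrak{m}} \cap R = \mathfrak{m}$. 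Intersecting over all such $\mathfrak{m}$ gives $J \cap R \subseteq I \cap R$.

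The only genuinely nontrivial step is the first one (showing $I \cap R$ is prime in $R$), and even that is routine once one remembers that elements of $R$ are central in $\hr$. Everything else is bookkeeping combining Lemma \ref{lem4} with the Jacobson assumption, which is why the author calls the corollary a trivial application.
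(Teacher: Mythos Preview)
Your proposal is correct and matches the paper's intended argument: the paper itself gives no details beyond calling the corollary ``a trivial application of Lemma~\ref{lem4} and the Jacobson property of $R$'', and you have spelled out precisely that application. The only remark is that your invocation of Lemma~\ref{lem1} can be made even shorter---condition~(2) there (primality as a submodule) says directly that $ra\in I$ with $r\in R$, $a\in\hr$ forces $r\in I$ or $a\in I$, so for $a,b\in R$ with $ab\in I$ one gets $a\in I\cap R$ or $b\in I\cap R$ without the detour through $a\hr b$; but your computation is of course also valid.
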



\begin{rem}
\label{rem0}
In the proofs of Lemmas \ref{lem5} and \ref{lem6} we will use the 1-1 correspondence 
between the ideals of $R$ and the two-sided ideals of $\HH_R$
given by $\mathfrak{a} \mapsto \hr \mathfrak{a}$ and $I \mapsto I \cap R$.
This correspondence follows from Remark \ref{rem00}; see Lemma 4.2 in \cite{ap1}.
Note also that it sends prime ideals to prime ideals. Namely, if $I \cap R$ is prime
and  $a,b \in  \hr$ satisfy $a \hr b \subseteq I$ then $a_i b_j \in I \cap R$ for all $i,j$ by Remark \ref{rem00}.
It follows that either $a_i \in I \cap R$ for all $i$ or $b_j \in I \cap R$ for all $j$
which implies that either $a \in I$ or $b \in I$. The other direction is clear.
\end{rem}

Lemma \ref{lem5} is a minor variation of \cite[Lemma 4.3]{ap1}. 

\begin{lem}
\label{lem5}
Let $I$ be a prime left ideal of $\hr$ and let $J$ be a left ideal of $\hr$ such that $I \subseteq J$ and $I \cap R=J \cap R$.
If $I$ is not a two-sided ideal then $I=J$.
\end{lem}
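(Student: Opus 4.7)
The plan is to derive a contradiction from a supposed $b \in J \setminus I$ by passing to the quaternion algebra over the residue field of $\mathfrak{p} := I \cap R$ and exploiting the dichotomy between split and division quaternion algebras. Primeness of $I$ (via Lemma~\ref{lem1}) makes $\mathfrak{p}$ a prime ideal of $R$, and Remark~\ref{rem0} together with the hypothesis that $I$ is not two-sided gives $\HH_R \mathfrak{p} \subsetneq I$, so I pick $c \in I \setminus \HH_R \mathfrak{p}$. Note that $|c|^2 = \bar{c} c \in \HH_R \cdot c \subseteq I$, hence $|c|^2 \in I \cap R = \mathfrak{p}$, and the same holds for every element of $I$.

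The first computational step is to show $c \bar{b} \in \HH_R \mathfrak{p}$. For each $q \in \{1, \ii, \jj, \kk\}$ the element $qc + b$ lies in $J$, so $|qc+b|^2 = \overline{qc+b}(qc+b) \in J \cap R = \mathfrak{p}$; subtracting the terms $|qc|^2, |b|^2 \in \mathfrak{p}$ and dividing by $2$ (using $\frac{1}{2} \in R$) yields $\re(qc\bar{b}) \in \mathfrak{p}$. Reading off real parts as $q$ varies forces all four components of $c\bar{b}$ to lie in $\mathfrak{p}$, i.e.\ $c\bar{b} \in \HH_R \mathfrak{p}$, and dually $b\bar{c} \in \HH_R \mathfrak{p}$; the same derivation applied with $b$ replaced by any $b' \in J$ yields $J \cdot \bar{c} \subseteq \HH_R \mathfrak{p}$.

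Next I pass to the fraction field $K := \mathrm{Frac}(R/\mathfrak{p})$ and view $\HH_K$ as the quaternion algebra over $K$, with induced images $I_K, J_K$ of $I, J$ under the composite map $\HH_R \twoheadrightarrow \HH_R/\HH_R \mathfrak{p} \hookrightarrow \HH_K$. The image of $c$ in $\HH_K$ is nonzero (since $c \notin \HH_R \mathfrak{p}$) while $|c|^2 \in \mathfrak{p}$ vanishes in $K$, so $\HH_K$ contains a nonzero zero divisor; since a quaternion algebra over a field of characteristic $\neq 2$ is either a division ring or $M_2(K)$, we must have $\HH_K \cong M_2(K)$. In $M_2(K)$ the left annihilator $L$ of the rank-one matrix $\bar{c}$ is a minimal left ideal of $K$-dimension $2$; the previous step gives $J_K \subseteq L$ and $I_K \subseteq L$, and since $I_K$ contains the nonzero image of $c$, minimality forces $I_K = L = J_K$.

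Finally, the image of $b$ in $J_K = I_K$ produces $r \in R \setminus \mathfrak{p}$ with $rb \in I + \HH_R \mathfrak{p} = I$; since $r \notin \mathfrak{p} = I \cap R$ we have $r \notin I$, so Lemma~\ref{lem1} applied to $rb \in I$ forces $b \in I$, contradicting the choice of $b$. I expect the splitting step---forcing $\HH_K \cong M_2(K)$ from the existence of $c \in I \setminus \HH_R \mathfrak{p}$ with $|c|^2 \in \mathfrak{p}$---to be the main conceptual obstacle, since this is precisely where the hypothesis that $I$ is not two-sided enters; without it, the image of $I$ in $\HH_K$ collapses to zero and the argument fails.
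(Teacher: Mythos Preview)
Your argument is correct, but it takes a substantially different route from the paper's proof. Both proofs begin with the same polarization trick---expanding $|qc+b|^2$ (you) or $(\overline{b+c})(b+c)$ (the paper)---to push products into $I\cap R$. From there the paths diverge. The paper shows directly that $\bar{I}J\subseteq I$, then observes that the \emph{right} ideal $I\hr$ satisfies $\mathfrak{p}':=I\hr\cap R\supsetneq\mathfrak{p}$ (this is where non-two-sidedness enters), picks $a\in\mathfrak{p}'\setminus\mathfrak{p}$, and notes $a=\bar a\in\hr\bar I$; then $aJ\subseteq\hr\bar I J\subseteq I$ with $a\in R\setminus I$, so Lemma~\ref{lem1} finishes. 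No quotients, no localization, no structure theory---just a central multiplier manufactured from $I\hr$. Your argument instead reduces modulo $\mathfrak{p}$ and localizes to the fraction field $K$, invokes the division/split dichotomy for quaternion algebras to force $\HH_K\cong M_2(K)$, and then uses that minimal left ideals in $M_2(K)$ are two-dimensional to squeeze $I_K=J_K$. The paper's approach is shorter and entirely self-contained; yours is more structural and makes visible \emph{why} the non-two-sided hypothesis matters (it produces an isotropic vector forcing the split case). One small imprecision worth tightening: your $I_K,J_K$ should be the $K$-spans (equivalently, the left ideals of $\HH_K$ generated by the images), not the bare images, since you need them to be left ideals of $\HH_K$ to invoke minimality; the denominator-clearing in your final step then works exactly as you describe.
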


\begin{proof}
For every $a \in J$ we have $\bar{a}a \in J \cap R  \subseteq I$. It follows that for every $b \in J$ and $c \in I$,
$\bar{c}b=(\overline{b+c})(b+c)-\bar{b}b-\bar{c}c-\bar{b}c \in I$, i.e.
\begin{equation}
\label{eq4}
\bar{I}J \subseteq I.
\end{equation}
The ideal $\mathfrak{p}':=I \hr \cap R$ is different from the ideal $\mathfrak{p}:=I \cap R$.
(By Remark \ref{rem0}, $I \hr=\hr \mathfrak{p}'$. If $\mathfrak{p}'=\mathfrak{p}$ then $I \hr =\hr \mathfrak{p}\subseteq I$ 
which contradicts the assumption that $I$ is not two-sided.) 
Pick $a \in \mathfrak{p}' \setminus \mathfrak{p}$ and note that $a =\bar{a} \in \hr \bar{I}$.
To prove that $J \subseteq I$ pick $x \in J$. By \eqref{eq4} we have  $a x \in \hr \bar{I}J \subseteq I$. Since $I$ is prime and $a \in R \setminus I$,
it follows by Lemma \ref{lem1} that $x \in I$. Therefore $I=J$.
\end{proof}

\begin{lem}
\label{lem6}
Every two-sided prime ideal in $\hr$ is an intersection of maximal left ideals.
\end{lem}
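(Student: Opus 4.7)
The plan is to reduce the problem to the commutative Jacobson property of $R$ via the two-sided correspondence of Remark~\ref{rem0}. Let $I$ be a two-sided prime ideal of $\hr$, set $\mathfrak{p}:=I\cap R$, and note that by Remark~\ref{rem0} this is a prime ideal of $R$ with $I=\hr\mathfrak{p}$. Because the definitions of prime left ideal and two-sided prime ideal coincide on two-sided ideals, $I$ is prime in the sense of Section~\ref{sec1}, so Corollary~\ref{cor1} is available.

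Let $J$ denote the intersection of all maximal left ideals of $\hr$ that contain $I$. This intersection is nonempty: since $R$ is Jacobson there exists a maximal ideal $\mathfrak{m}\supseteq\mathfrak{p}$, and Lemma~\ref{lem4} produces a maximal left ideal of $\hr$ lying above $I$ and over $\mathfrak{m}$. Thus $I\subseteq J$, and by Corollary~\ref{cor1} we have $I\cap R=J\cap R=\mathfrak{p}$. The goal is to upgrade the trivial inclusion to equality.

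The crux of the argument is to show that $J$ is itself a two-sided ideal. The quotient map $\pi\colon\hr\to\hr/I$ sets up a bijection between maximal left ideals of $\hr$ containing $I$ and maximal left ideals of $\hr/I$; consequently $J/I=\pi(J)$ equals the intersection of all maximal left ideals of $\hr/I$, which is by definition the Jacobson radical of $\hr/I$. Since the Jacobson radical is always a two-sided ideal, so is $J/I$, and therefore $J$ is two-sided in $\hr$.

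With both $I$ and $J$ two-sided and $I\cap R=J\cap R=\mathfrak{p}$, the bijection of Remark~\ref{rem0} forces $I=\hr\mathfrak{p}=J$, completing the proof. I expect the two-sidedness of $J$ to be the only step requiring genuine thought; the rest is a routine chain of invocations of Lemma~\ref{lem4}, Corollary~\ref{cor1}, and the two-sided correspondence of Remark~\ref{rem0}. An alternative, slightly more concrete route would bypass the Jacobson radical by writing $\mathfrak{p}=\bigcap_{\mathfrak{m}\supseteq\mathfrak{p}}\mathfrak{m}$, noting that $I=\hr\mathfrak{p}=\bigcap_{\mathfrak{m}}\hr\mathfrak{m}$ since $\hr$ is free over $R$, and then observing that each $\hr\mathfrak{m}$ is an intersection of maximal left ideals because $\hr/\hr\mathfrak{m}\cong\HH_{R/\mathfrak{m}}$ is a four-dimensional central simple algebra over the field $R/\mathfrak{m}$ (of characteristic $\neq 2$ since $\tfrac12\in R$), hence semisimple by Wedderburn.
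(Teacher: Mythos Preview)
Your argument is correct, and the alternative route you sketch at the end is essentially the paper's own proof: reduce via Remark~\ref{rem0} to writing $\mathfrak{p}$ as an intersection of maximal ideals of $R$, and then use that each $\hr/\hr\mathfrak{m}$ is simple (hence has trivial Jacobson radical) to write each $\hr\mathfrak{m}$ as an intersection of maximal left ideals.

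Your main argument is a genuine variant. The paper proceeds in two stages---first decomposing $I$ into maximal two-sided ideals, then each of those into maximal left ideals---whereas you go directly to the intersection $J$ of all maximal left ideals over $I$, observe that $J/I$ is the Jacobson radical of $\hr/I$ and hence two-sided, and then invoke the bijection of Remark~\ref{rem0} together with Corollary~\ref{cor1} to force $I=J$. Your route is slightly slicker in that it never names the intermediate maximal two-sided ideals, and it makes transparent exactly where the Jacobson hypothesis enters (through Corollary~\ref{cor1}); the paper's route is a bit more concrete in that it exhibits the decomposition explicitly. Both rely on the same two ingredients: the Jacobson property of $R$ and the fact that the Jacobson radical of a ring is two-sided.
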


\begin{proof}
Since $R$ is a Jacobson ring, it follows by Remark \ref{rem0} that every two-sided prime ideal of $\hr$ is equal to 
an intersection of maximal two-sided ideals. 

It remains to show that every maximal two-sided ideal $M$ of $\hr$ is an intersection of maximal left ideals.
Namely, since $\hr/M$ is a simple ring, it has a trivial Jacobson radical. Therefore, the intersection of maximal
left ideals in $\hr/M$ is equal to zero which implies the claim.
\end{proof}

We are now ready for the proof of Theorem \ref{thm2}.

\begin{proof}
Let $I$ be a prime left ideal of $\hr$ and let $J$ be the intersection of all maximal left ideals of $\hr$ that contain $I$.
By Corollary \ref{cor1},  $I \cap R=J \cap R$, and clearly $I \subseteq J$. 
If $I$ is a two-sided ideal then $I=J$ by Lemma \ref{lem6}.
If $I$ is not two-sided then $I=J$ by Lemma \ref{lem5}.
\end{proof}

Theorem  \ref{thm3} is our first main result.
It rephrases claim (4) of Theorem \ref{thmb}.
 It is a corollary of Theorems \ref{thm1} and \ref{thm2}.
An alternative proof will be given by Theorem \ref{appthm}.

\begin{thm}
\label{thm3}
For every $f, g_1,\ldots,g_m \in \HH[x_1,\ldots,x_d]$, the following statements are equivalent:
\begin{enumerate}
\item For every $a \in \HH_c^d$, if $g_1(a)=\ldots=g_m(a)=0$ then $f(a)=0$.
\item $f$ belongs to the smallest semiprime left ideal containing  $g_1,\ldots,g_m$.
\end{enumerate}
\end{thm}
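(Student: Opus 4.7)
The plan is to show that the smallest semiprime left ideal $J$ of $A := \HH[x_1,\ldots,x_d]$ containing the left ideal $I$ generated by $g_1,\ldots,g_m$ coincides with
\[
\bigcap_{a \in Z} I_a,
\qquad
Z := \{a \in \HH_c^d \mid g_1(a)=\cdots=g_m(a)=0\},
\]
after which both implications of the theorem are immediate.

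For the direction (2)$\Rightarrow$(1) I would first observe that an arbitrary intersection of semiprime left ideals is again semiprime (immediate from the definition in Section \ref{sec1}), so $J$ is well-defined. Next I would verify the general fact that every maximal left ideal $M$ of $A$ is prime: if $b \notin M$ then $Ab + M = A$, so $1 = cb + m$ for some $c \in A$ and $m \in M$, and therefore any $a$ with $aAb \subseteq M$ satisfies $a = acb + am \in M$ since $M$ is a left ideal. In particular every maximal left ideal is semiprime. By Theorem \ref{thma}, for each $a \in Z$ the left ideal $I_a$ is maximal and contains every $g_i$, so it is a semiprime left ideal containing $I$; hence $J \subseteq I_a$, and (2) forces $f(a)=0$ for every $a \in Z$.

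For the direction (1)$\Rightarrow$(2) I would rewrite $J$ using the structural results already proved. Theorem \ref{thm1}, applied to the commutative ring $R = \RR[x_1,\ldots,x_d]$ (which contains $\tfrac12$), gives $J = \bigcap \{P : P \text{ prime left ideal of } A,\ I \subseteq P\}$. Since $\RR[x_1,\ldots,x_d]$ is Jacobson, Theorem \ref{thm2} rewrites each such $P$ as an intersection of maximal left ideals, all of which necessarily contain $I$; together with the fact that maximal left ideals are themselves prime, this yields $J = \bigcap \{M : M \text{ maximal left ideal of } A,\ I \subseteq M\}$. Finally Theorem \ref{thma} identifies each such $M$ with some $I_a$, $a \in \HH_c^d$, and the condition $I \subseteq I_a$ is exactly $a \in Z$. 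Thus $J = \bigcap_{a \in Z} I_a$, and hypothesis (1) gives $f \in J$.

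The main work has already been absorbed by Theorems \ref{thm1} and \ref{thm2}: the former encapsulates the subtle passage from semiprime to prime left ideals in $\HH_R$ (reducing via Lemmas \ref{lem1}--\ref{lem3} to the commutative result of \cite{cimpric}), while the latter supplies the Jacobson transfer from $R$ to $\HH_R$. Given those, the present statement is essentially bookkeeping; the only items worth verifying by hand are that maximal left ideals are semiprime (one line above), that intersections of semiprime left ideals are semiprime (immediate from the definition), and that the intersection of primes containing $I$ equals the intersection of maximals containing $I$ (both inclusions are clear since maximals are prime and, by Theorem \ref{thm2}, each prime containing $I$ is an intersection of maximals that still contain $I$).
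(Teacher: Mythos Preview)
Your proof is correct and follows essentially the same approach as the paper: both arguments establish that the smallest semiprime left ideal containing $I$ equals $\bigcap_{a\in Z} I_a$ by invoking Theorem~\ref{thma} for the description of maximal left ideals, Theorem~\ref{thm1} for the passage from semiprime to intersections of primes, and Theorem~\ref{thm2} for the Jacobson step from primes to maximals. The paper's version is simply more terse, leaving the verification that maximal left ideals are prime (hence semiprime) and the double-inclusion argument implicit.
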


\begin{proof}
Write $I$ for the left ideal generated by $g_1,\ldots,g_m$. Clearly, claim (1) is equivalent to $f \in \cap_{I \subseteq I_a} I_a$.
By Theorems \ref{thma}, \ref{thm1} and \ref{thm2}, $\cap_{I \subseteq I_a} I_a$ is equal to the smallest semiprime left ideal
that contains $I$.
\end{proof}

\section{Completely prime left ideals}
\label{sec3}

The aim of this section is to extend  \cite[Theorem 1.2]{ap1} from quaternionic to matrix polynomials;
i.e. to prove claim (2) of Theorem \ref{thmd}. See Corollary \ref{cor3}.

Let us recall from \cite{reyes} the definition of a completely prime left ideal.
A left ideal $I$ of an associative unital ring $A$ is \textit{completely prime} 
if for every $a,b \in A$ such that $ab \in I$ and $Ib \subseteq I$ we have $a \in A$ or $b \in A$. 

In Lemmas \ref{lem7a} and \ref{lem7b} we observe the following inclusions
$$
\left\{ {\mbox{maximal left} \atop \mbox{ideals of } M_n(R)}  \right\} \subseteq 
\left\{ {\mbox{completely prime} \atop \mbox{left ideals of } M_n(R)}  \right\} \subseteq 
\left\{ {\mbox{prime left} \atop \mbox{ideals of } M_n(R)}  \right\}
$$
which together with \cite[Theorem 3]{cimpric} imply  Corollary \ref{cor3}.

\begin{lem}
\label{lem7a}
Every maximal left ideal of every associative unital ring is completely prime.
\end{lem}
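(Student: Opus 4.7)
The plan is straightforward, but it begins with fixing what appear to be typographical slips in the definition: the completely prime condition should read ``for all $a,b \in A$ with $ab \in I$ and $Ib \subseteq I$, either $a \in I$ or $b \in I$''. I will use this corrected version.

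Let $M$ be a maximal left ideal of $A$, and suppose $a, b \in A$ satisfy $ab \in M$ and $Mb \subseteq M$. I would argue by contrapositive: assume $a \notin M$, and deduce $b \in M$. Since $M + Aa$ is a left ideal of $A$ containing $a$ and strictly larger than $M$, maximality forces $M + Aa = A$. In particular there exist $m \in M$ and $c \in A$ with $1 = m + ca$.

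The key move is to multiply this identity on the right by $b$, which is the natural choice because both hypotheses $ab \in M$ and $Mb \subseteq M$ concern right-multiplication by $b$. This gives
$$b = mb + cab.$$
Now $mb \in Mb \subseteq M$ by the normalizing hypothesis, and $cab = c(ab) \in AM \subseteq M$ because $ab \in M$ and $M$ is a left ideal. Hence $b \in M$, as required.

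There is essentially no serious obstacle: the proof is a single application of maximality combined with the two hypotheses. The only conceptual point is recognizing that one should right-multiply the B\'ezout-style decomposition of $1$ by $b$ rather than attempt some other manipulation, and this is dictated by the fact that the completely prime condition is inherently asymmetric in $a$ and $b$.
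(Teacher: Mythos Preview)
Your proof is correct and essentially identical to the paper's: the paper also assumes $a\notin M$, writes $1=m+ca$ with $m\in M$ and $c\in A$, and concludes $b=mb+cab\in M$. Your observation about the typographical slips in the stated definition is also accurate.
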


\begin{proof}
See  \cite[Corollary 2.10]{reyes}.
\end{proof}

Lemma \ref{matlem} is the analogue of Lemmas \ref{lem1}  and \ref{lem2} for  $M_n(R)$.
We will write $\id$ for the identity matrix.

\begin{lem}
\label{matlem}
Let $R$ be a commutative unital ring, $n$ a positive integer and $I$ a left ideal of $M_n(R)$.
\begin{enumerate}
\item[(a)] The left ideal $I$ is prime  iff for every $a \in R$ and $B \in M_n(R)$ such that $aB \in I$ we have either $a \, \id \in I$ or $B \in I$. 
\item[(b)] The left ideal $I$ is semiprime iff for every $A=[a_{i,j}] \in M_n(R)$ such that $a_{i,j}A \in I$ for all $i,j$ we have $A \in I$.
 \end{enumerate}
 \end{lem}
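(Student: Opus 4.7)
The plan is to prove both parts by the same template used for Lemmas \ref{lem1} and \ref{lem2} in the quaternionic case, with the matrix units $E_{k,l}$ playing the role that $\ii,\jj,\kk$ played there. The single computational identity behind everything is $E_{k,i}\, A\, E_{j,l} = a_{i,j}\, E_{k,l}$, which isolates an arbitrary entry $a_{i,j}$ of $A$. Throughout I will use that $\id = \sum_k E_{k,k}$ and that $I$ is closed under left multiplication by matrix units.

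For part (a), the forward implication is almost immediate: given $aB \in I$ with $a \in R$ and $B \in M_n(R)$, for every $C \in M_n(R)$ we have $(a\id)\,C\,B = C\,(aB) \in I$, so $(a\id)\,M_n(R)\,B \subseteq I$ and primeness yields $a\id \in I$ or $B \in I$. For the converse, assume the entrywise condition and suppose $A\,M_n(R)\,B \subseteq I$. Apply this to $E_{j,l}\,B$ on the right and $E_{k,i}$ on the left, using the identity above, to get $a_{i,j}\,E_{k,l}\,B \in I$ for all $i,j,k,l$. Summing over $k=l$ and using $\id=\sum_k E_{k,k}$ gives $a_{i,j}\,B \in I$; the hypothesis then forces, for each $(i,j)$, either $a_{i,j}\id \in I$ or $B \in I$. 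If $B \notin I$, all $a_{i,j}\id$ lie in $I$, so $A = \sum_{i,j} E_{i,j}\,(a_{i,j}\id) \in I$.

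For part (b), the key ingredient is the identity
\begin{equation*}
A\,E_{k,l}\,A \;=\; \sum_{i=1}^{n} E_{i,l}\,(a_{i,k}\,A),
\end{equation*}
a quick entry-by-entry verification (both sides have $(p,q)$-entry $a_{p,k}a_{l,q}$). Assuming the standard semiprime condition and that $a_{i,j}A \in I$ for all $i,j$, this identity shows $A\,E_{k,l}\,A \in I$ for all $k,l$, hence $A\,M_n(R)\,A \subseteq I$, so $A \in I$. Conversely, assuming the entrywise characterization and $A\,M_n(R)\,A \subseteq I$, use $E_{p,i}\,A\,E_{j,q} = a_{i,j}\,E_{p,q}$ to get $a_{i,j}\,E_{p,q}\,A = E_{p,i}\,(A\,E_{j,q}\,A) \in I$ for all $p,q$; summing over $p=q$ yields $a_{i,j}A \in I$ for all $i,j$, and the hypothesis gives $A \in I$.

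The only nontrivial step is discovering the forward identity in (b); once it is written down, the rest is routine matrix-unit bookkeeping and a direct appeal to the definitions of primeness and semiprimeness.
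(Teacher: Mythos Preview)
Your proof is correct and follows essentially the same approach as the paper: the matrix-unit identity $a_{i,j}\id=\sum_k E_{k,i}AE_{j,k}$ (equivalently $E_{k,i}AE_{j,l}=a_{i,j}E_{k,l}$) is exactly the tool the paper uses for part~(a), and your explicit treatment of part~(b) is the natural ``similar'' argument the paper leaves to the reader, with your identity $AE_{k,l}A=\sum_i E_{i,l}(a_{i,k}A)$ being precisely the specialization of $ACA=\sum_{i,j}E_{i,j}C(a_{i,j}A)$ to $C=E_{k,l}$.
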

 
 \begin{proof} 
 Suppose that $I$ satisfies the property in claim (a). To prove that $I$ is prime, 
 pick any $A,B\in M_n(R)$ such that $A M_n(R) B \subseteq I$. If $A=[a_{r,s}]$
then $a_{i,j}\id =\sum_{k=1}^n E_{k,i} A E_{j,k}$ for each $i,j$ 
(where $E_{r,s}$ are coordinate matrices). It follows that
$a_{i,j}B=\sum_{k=1}^n E_{k,i} A E_{j,k} B \in I$  for each $i,j$.
If $B \not\in I$, it follows by assumption that $a_{i,j}  \id \in I$ for all $i,j$
which implies that $A=\sum_{i,j} E_{i,j}(a_{i,j} \id) \in I$.
Conversely, if $I$ is prime and $aB \in I$ for some $a \in R$ and $B \in M_n(R)$
then $(a \, \id) M_n(R) B \subseteq I$ implying that $a \, \id \in I$ or $B \in I$.
The proof of (b) is similar.
 \end{proof}

\begin{lem}
\label{lem7b} 
Let $R$ be a commutative unital ring and $n$ a positive integer.
Every completely prime left ideal of $M_n(R)$ is prime.
\end{lem}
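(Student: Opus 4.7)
The plan is to reduce primality to the convenient characterization in Lemma \ref{matlem}(a) and then apply complete primality to a cleverly chosen pair in which one element is a central scalar matrix.

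First I would set up: let $I$ be a completely prime left ideal of $M_n(R)$. By Lemma \ref{matlem}(a), to show $I$ is prime it suffices to prove that whenever $a \in R$ and $B \in M_n(R)$ satisfy $aB \in I$, either $a\,\id \in I$ or $B \in I$.

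The key observation is that $a\,\id$ lies in the center of $M_n(R)$, so $I(a\,\id) = (a\,\id)\,I \subseteq I$ because $I$ is a left ideal. Therefore, setting $x := B$ and $y := a\,\id$, the hypotheses of the definition of completely prime are satisfied: namely $xy = B(a\,\id) = aB \in I$ and $Iy = I(a\,\id) \subseteq I$. Complete primality then gives $B \in I$ or $a\,\id \in I$, which is exactly what Lemma \ref{matlem}(a) requires.

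There is no real obstacle here; the only subtle point is the choice to place the scalar on the right rather than the left, so that one can invoke $I\cdot y \subseteq I$ via centrality of $a\,\id$ combined with the fact that $I$ is a \emph{left} ideal. If one instead tried the pair $(a\,\id, B)$, one would need $IB \subseteq I$, which is not automatic. Thus the whole argument amounts to a one-line reduction once the centrality of scalar matrices is used correctly.
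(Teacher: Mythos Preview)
Your proof is correct and follows exactly the paper's argument: reduce via Lemma~\ref{matlem}(a), then apply complete primality to the pair $(B,\,a\,\id)$ using that $a\,\id$ is central so $I(a\,\id)\subseteq I$. Your additional remark on why the scalar must go on the right is a nice clarification but otherwise there is nothing to add.
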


\begin{proof}
Suppose that $I$ is a completely prime left ideal of $M_n(R)$.
We will  prove that $I$ is prime by verifying the propery in claim (a) of Lemma \ref{matlem}.
Pick $a \in R$ and $B \in M_n(R)$ such that $a B \in I$. It follows that
$B (a\,\id) \in I$ and $I (a\,\id) \subseteq I$. By the definition of a completely prime ideal
either $a\,\id \in I$ or $B \in I$.  
\end{proof}

We are now ready for the main result of this section.

\begin{cor}
\label{cor3}
Let $R$ be a Jacobson ring and $n$ a positive integer. For every left ideal $I$ of  $M_n(R)$
the following are equivalent:
\begin{enumerate}
\item[(a)] $I$ is an intersection of maximal left ideals,
\item[(b)] $I$ is an intersection of completely prime left ideals,
\item[(c)] $I$ is an intersection of prime left ideals,
\item[(d)] $I$ is semiprime.
\end{enumerate}
\end{cor}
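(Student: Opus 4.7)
The plan is to prove the four-way equivalence by establishing the cycle (a)$\Rightarrow$(b)$\Rightarrow$(c)$\Rightarrow$(d)$\Rightarrow$(a). Three of these arrows are essentially formal and follow from material already in place, while the last one is the nontrivial input and is supplied by the external reference \cite[Theorem 3]{cimpric}.

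First I would observe that (a)$\Rightarrow$(b) is immediate from Lemma \ref{lem7a}: every maximal left ideal of $M_n(R)$ is completely prime, so an intersection of maximal left ideals is in particular an intersection of completely prime left ideals (the same family works). Next, (b)$\Rightarrow$(c) follows at once from Lemma \ref{lem7b}, which says that every completely prime left ideal of $M_n(R)$ is prime; again the same family of ideals witnesses both intersections.

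For (c)$\Rightarrow$(d), I would note the general fact that every prime left ideal is semiprime (if $a M_n(R) a \subseteq J$ then applying the definition of primeness with $b=a$ gives $a\in J$), and that an arbitrary intersection of semiprime left ideals is semiprime: if $a M_n(R) a \subseteq \bigcap_\alpha J_\alpha$ then $a M_n(R) a \subseteq J_\alpha$ for every $\alpha$, hence $a \in J_\alpha$ for every $\alpha$, hence $a \in \bigcap_\alpha J_\alpha$. Combining these two observations yields (c)$\Rightarrow$(d).

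The only substantive step is (d)$\Rightarrow$(a), and this is precisely \cite[Theorem 3]{cimpric}, which asserts that, when $R$ is Jacobson, every semiprime left ideal of $M_n(R)$ is an intersection of maximal left ideals. I would simply quote that theorem to close the cycle. The main obstacle is therefore not in the cycle-chasing but in the fact that (d)$\Rightarrow$(a) is genuinely deep and cannot be deduced from the material in the present excerpt alone; it plays here the role analogous to Theorems \ref{thm1} and \ref{thm2} in the quaternionic setting, and without it one would have to reproduce a matrix-polynomial version of the Jacobson/semiprime analysis carried out in Sections 2 and 3 for $\HH_R$.
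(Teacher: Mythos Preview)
Your proposal is correct and follows exactly the same cycle (a)$\Rightarrow$(b)$\Rightarrow$(c)$\Rightarrow$(d)$\Rightarrow$(a) as the paper, invoking Lemma~\ref{lem7a}, Lemma~\ref{lem7b}, the elementary observation that intersections of (semi)prime left ideals are semiprime, and \cite[Theorem~3]{cimpric} for the nontrivial step. There is nothing to add.
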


\begin{proof}
By Lemma \ref{lem7a}, (a) implies (b).
By Lemma \ref{lem7b}, (b) implies (c).
Since every prime ideal is semiprime and
every intersection of semiprime ideals is also semiprime, (c) implies (d).
By \cite[Theorem 3]{cimpric}, (d) implies (a). 
\end{proof}

Lemma \ref{lem9} is an analogue of Lemma \ref{lem7b} for $\hr$.

\begin{lem}
\label{lem9}
Every completely prime left ideal of $\hr$ is prime. 
Every prime left ideal of $\hr$ which is not two-sided is completely prime.
However, a prime two-sided ideal of $\hr$  need not be completely prime. 
\end{lem}

\begin{proof} Suppose that $I$ is a completely prime left ideal and $r \in R$ and $a \in \hr$ are such that $ra \in I$.
Clearly, $ar \in I$ and $Ir \subseteq I$ which implies that $r \in I$ or $a \in I$. By Lemma \ref{lem1}, $I$ is prime.

Suppose now that $I$ is a prime left ideal of $\hr$ which is not two-sided. 
To prove that $I$ is completely prime, pick $a,b \in \hr$ such that $ab \in I$ and $I b \subseteq I$.
Write $J=I+\hr a$, $\mathfrak{p}'=J \cap R$ and $\mathfrak{p} =I \cap R$.
If $\mathfrak{p}'=\mathfrak{p}$ then, by Lemma \ref{lem5}, $I=J$  which implies that $a \in I$. 
Otherwise  pick $c \in \mathfrak{p}' \setminus \mathfrak{p}$ and note that $cb \in Jb \subseteq I$.
Since $I$ is prime and $c \in R \setminus I$, it follows by Lemma \ref{lem1} that $b \in I$.

If $R=\RR[x]$ then $\hr  (x^2+1)$ is prime
by Remark \ref{rem0} but it is not completely prime by \cite[Lemma 4.7]{ap1}. 
\end{proof}

We conclude this section by observing that our Theorems \ref{thm1} and \ref{thm2} 
imply the following extension of \cite[Theorem 1.2]{ap1}.

\begin{cor}
\label{cor2} 
Let $R$ be a Jacobson ring containing $\frac12$.
For every left ideal $I$ of  $\hr$ the following are equivalent:
\begin{enumerate}
\item[(a)] $I$ is an intersection of maximal left ideals,
\item[(b)] $I$ is an intersection of completely prime left ideals,
\item[(c)] $I$ is an intersection of prime left ideals,
\item[(d)] $I$ is semiprime.
\end{enumerate}
\end{cor}

\begin{proof}
By Lemma \ref{lem7a}, (a) implies (b). By Lemma \ref{lem9}, (b) implies (c).
Clearly, (c) implies (d). By Theorems \ref{thm1} and \ref{thm2}, (d) implies (a).
\end{proof}

\section{An explicit Nullstellensatz for matrix polynomials}
\label{sec4}

Theorem \ref{thm4} is the second main result of this paper. 
It rephrases claim (1) of Theorem \ref{thmd}.
The proof is based on Theorem \ref{thmc} (i.e. the weak nullstellensatz for matrix polynomials)
and the Rabinowitsch trick.
We can deduce  \cite[Theorem 1.5]{aya} from Theorem \ref{thm4}
by embedding $\HH[x_1,\ldots,x_d]$ into $M_2(\CC[x_1,\ldots,x_d])$;
see  Theorem \ref{appthm}.

\begin{thm}
\label{thm4}
Let $\acf$ be an algebraically closed field.
Pick any matrix polynomials $F,G_1,\ldots,G_m \in M_n(\acf[x_1,\ldots,x_d])$ 
and write $I$ for the left ideal  generated by $G_1,\ldots,G_m$. The following are equivalent:
\begin{enumerate}
\item For every point $a \in \acf^d$ and every vector $v \in \acf^n$ such that $G_1(a)v=\ldots=G_m(a)v=0$
we have  $F(a)v=0$.
\item For every constant matrix $A \in M_n(\acf)$  there exists  $N \in  \NN_0$ such that
$(AF)^N \in I+I(AF)+\ldots+I(AF)^N$.
\end{enumerate}
\end{thm}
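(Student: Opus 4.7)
My plan is to prove the two implications separately. The direction $(2)\Rightarrow(1)$ is a short direct argument, while $(1)\Rightarrow(2)$ uses the weak nullstellensatz (Theorem \ref{thmc}) together with the Rabinowitsch trick applied to a single auxiliary central variable $t$.

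For $(2)\Rightarrow(1)$, assume (1) fails and choose a directional point $(a,v)$ with $G_1(a)v=\ldots=G_m(a)v=0$ but $F(a)v\ne 0$. Since $F(a)v\ne 0$, I can pick a constant matrix $A\in M_n(\CC)$ with $AF(a)v=v$ (extend $F(a)v$ to a basis and declare where it maps). Then $(AF(a))^kv=v$ for every $k\ge 0$. If (2) held for this $A$, then writing $(AF)^N=\sum_{k=0}^N H_k(AF)^k$ with each $H_k\in I$, and noting that any element of $I$ evaluated at $a$ annihilates $v$, evaluation at $(a,v)$ would give $v=(AF(a))^Nv=0$, a contradiction.

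For $(1)\Rightarrow(2)$, fix $A\in M_n(\CC)$, introduce a new central variable $t$, and consider the left ideal $J\subseteq M_n(\CC[x_1,\ldots,x_d,t])$ generated by $G_1,\ldots,G_m,\ I_n-tAF$. If $J$ were proper, Theorem \ref{thmc} would produce a directional point $((a,t_0),v)$ with $G_i(a)v=0$ and $(I_n-t_0 AF(a))v=0$. The second relation forces $t_0\ne 0$ (else $v=0$), and then $v=t_0 AF(a)v$ shows $F(a)v\ne 0$, contradicting (1). Hence $J$ is the whole ring and I can write
$$I_n=\sum_i H_i(x,t)G_i(x)+H_0(x,t)\bigl(I_n-tAF(x)\bigr)$$
for some $H_i,H_0\in M_n(\CC[x_1,\ldots,x_d,t])$.

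It remains to extract a matrix-polynomial identity from this by expanding in powers of $t$. Write $H_i(x,t)=\sum_{k=0}^N H_{i,k}(x)t^k$ and $H_0(x,t)=\sum_{k=0}^N K_k(x)t^k$ (with $N$ large enough), and compare coefficients of $t^k$, using that $t$ is central. The $t^0$-term gives $K_0=I_n-\sum_i H_{i,0}G_i$, so $K_0-I_n\in I$. For $1\le k\le N$ the $t^k$-term gives $K_k-K_{k-1}AF\in I$, and the $t^{N+1}$-term gives $K_N AF=0$. A straightforward induction on $k$ then shows
$$K_k-(AF)^k\in I+I(AF)+\ldots+I(AF)^k,$$
using that multiplying an element of $I+I(AF)+\ldots+I(AF)^{k-1}$ on the right by $AF$ lands in $I(AF)+\ldots+I(AF)^k$. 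Applying this at $k=N$ and combining with $K_NAF=0$ yields $(AF)^{N+1}\in I+I(AF)+\ldots+I(AF)^{N+1}$, which is (2).

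The step I expect to require the most care is the last one: because $I$ is only a left ideal, I cannot absorb factors of $AF$ on the right into $I$ itself, so the layered filtration $I+I(AF)+\ldots+I(AF)^k$ must be tracked inductively. Everything else is the classical Rabinowitsch argument transplanted to the matrix-polynomial setting, with the one novelty that the auxiliary polynomial to be adjoined is $I_n-tAF$ rather than $1-tf$, which is what forces the conclusion to be quantified over all constant matrices $A$.
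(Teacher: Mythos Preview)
Your proof is correct and follows essentially the same approach as the paper's: the Rabinowitsch trick with an extra central variable for $(1)\Rightarrow(2)$, and evaluation at a suitably chosen $A$ for $(2)\Rightarrow(1)$. The differences are cosmetic: the paper first proves the Rabinowitsch identity for $F$ itself (using $yF-\id$) and then remarks that the same argument works with $F$ replaced by $AF$, whereas you build $A$ in from the start; the paper extracts $(AF)^N\in I+I(AF)+\ldots+I(AF)^N$ by a one-line telescoping of the coefficient equations rather than your equivalent induction; and for $(2)\Rightarrow(1)$ the paper takes $A=vv^*F(a)^*$ and computes $(AF(a))^Nv=v\,(v^*F(a)^*F(a)v)^N$ directly, while you pick $A$ with $AF(a)v=v$ and argue by contradiction.
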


\begin{proof}
Write $R=\acf[x_1,\ldots,x_d]$ and $R'=R[y]$. Let $I'$ be the left ideal in $M_n(R')$ generated by $I$
and $y F- \id$  where $\id$ is the identity matrix.

Firstly, we show that claim (1) of Theorem \ref{thm4} implies that $\id \in I'$.
Suppose that $\id \not\in I'$. 
By Theorem \ref{thmc}, there exist a point 
$(a_1,\ldots,a_d,b) \in \acf^{d+1}$ and a nonzero vector $v \in \acf^n$. such that 
$H(a_1,\ldots,a_d,b)v=0$ for every matrix polynomial $H \in I'$. In particular,
$G_i(a_1,\ldots,a_d)v=0$ for all $i$ and $b F(a_1,\ldots,a_n)v-v=0$.
By claim (1), we also have $F(a_1,\ldots,a_n)v=0$ which gives a contradiction $-v=0$. 

Secondly, we show that $\id \in I'$ implies claim (2) of Theorem \ref{thm4}.
Pick  $H_1,\ldots,H_m \in M_n(R')$ and $K \in M_n(R')$ such that
\begin{equation}
\label{i1}
H_1 G_1+\ldots+H_m G_m +K(y F-\id)=\id.
\end{equation}
Write $H_1,\ldots,H_m$ and $K$ as polynomials in $y$ with coefficients in $R$; i.e. 
\begin{equation}
\label{i2}
H_1=\sum_{i=0}^N H_{1,i} y^i, \ldots, H_m=\sum_{i=0}^N H_{m,i} y^i 
\mbox{ and } K=\sum_{i=0}^{N-1} K_i y^i
\end{equation}
where $N \in \NN$ and $H_{k,i}, K_i \in R$ for all $i,k$.
By inserting \eqref{i2} into \eqref{i1} and comparing coefficients at the powers of $y$ we obtain equations
\begin{equation}
\begin{array}{ccc}
 \label{i3} 
H_{1,0}G_1+\ldots+H_{m,0}G_m+(-K_0) & = & \id, \\
H_{1,1}G_1+\ldots+H_{m,1}G_m+(K_0 F-K_1) & = & 0, \\
H_{1,2}G_1+\ldots+H_{m,2}G_m+(K_1 F-K_2) & = & 0, \\
 \vdots &  &  \\
H_{1,N-1}G_1+\ldots+H_{m,N-1}G_m+(K_{N-2}F-K_{N-1}) & = & 0, \\
H_{1,N}G_1+\ldots+H_{m,N}G_m+(K_{N-1} F) & = & 0. \\
\end{array}
\end{equation}
By telescoping \eqref{i3} we obtain 
\begin{equation}
\label{i4}
\sum_{k=0}^n (H_{1,k}G_1+\ldots+H_{m,k}G_m)F^{N-k}=F^N.
\end{equation}
Since $G_1,\ldots,G_m \in I$, equation \eqref{i4} implies that
\begin{equation}
\label{i5}
F^N \in I+IF+\ldots + IF^N.
\end{equation}
If we repeat the proof of \eqref{i5} with $F$ replaced by $AF$ for any $A \in M_n(\acf)$ 
we see that claim (1)  of Theorem \ref{thm4} implies claim (2).

Finally, we prove that claim (2) of Theorem \ref{thm4} implies claim (1).
Assume, for sake of contradiction, that claim (1) is false and 
pick any $a \in \acf^d$ and  $v \in \acf^n$ such that 
$G_1(a)v=\ldots=G_m(a)v=0$ and $F(a)v \ne 0$. 
Choose $A \in M_n(\acf)$ such that $A F(a)v=v$. 
By claim (2) there exist $N \in \NN$ and $L_0,L_1,\ldots,L_N \in I$ such that
\begin{equation}
\label{i7}
(AF)^N=L_0+L_1 (AF)+\ldots+L_N (AF)^N.
\end{equation}
If we evaluate \eqref{i7} in $a$ and multiply it by $v$ from the right
then,  by the choice of $A$, we get
\begin{equation}
\label{i8}
v=L_0(a)v+L_1(a) v+\ldots+L_N(a) v. 
\end{equation}
By the choice of $a$ and $v$, the left-hand side of \eqref{i8} is non-zero while the right-hand side is zero.
\end{proof}

\appendix
\def\appendixname{\!\!}
\section{Appendix: Another proof of Theorem \ref{thmb}}

We will give an alternative proof of Theorem \ref{thm3} by using \cite[Theorem 4]{cimpric}. 
We will also show that  Theorem \ref{thm4}  implies \cite[Theorem 1.5]{aya}.
In other words, we will show that Theorem \ref{thmb} follows from Theorem \ref{thmd}.

Let $R$ be a commutative unital ring. Its \textit{complexification} is the ring
\begin{equation}
\ccr := \{r_0+r_1 \ii \mid r_0,r_1 \in R\}
\end{equation}
where $\ii^2=-1$. Clearly, $\CC_{\RR[x_1,\ldots,x_d]}=\CC[x_1,\ldots,x_d]$.
Write $\overline{r_0+ r_1 \ii}=r_0- r_1 \ii$.
Every element of $\hr$ can be written as $p+\jj q$ where $p,q \in \ccr$.
Note that we have a homomorphism
\begin{equation}
\label{fm1}
\begin{array}{ccccc}
\varphi &  \colon & \hr & \to & M_2(\ccr), \\
& &  p+ \jj q & \mapsto & \left[ \begin{array}{cc} p & -\bar{q} \\ q &\bar{p} \end{array} \right].
\end{array}
\end{equation}

Lemma \ref{applem1} will be used in Proposition \ref{appprop2} and Theorem \ref{appthm}.

\begin{lem}
\label{applem1}
Let $R$ be a commutative ring containing $\frac12$.
Every element of $M_2(\ccr)$ can be uniquely expressed as
 $\varphi(z)+\ii \varphi(w)$ where $z,w \in \hr$. 
\end{lem}

\begin{proof}
Note that for any $A=\left[\begin{array}{cc} a & b \\ c & d \end{array} \right] \in M_2(\ccr)$ we have
\begin{equation}
\label{izp1} 
A=\frac12\left((a+d)\varphi(1)+\ii(d-a)\varphi(\ii)+(c-b)\varphi(\jj)+\ii(b+c)\varphi(\kk)\right).
\end{equation}
It follows that $\varphi(1),\varphi(\ii),\varphi(\jj),\varphi(\kk),\ii \varphi(1),\ii \varphi(\ii),\ii \varphi(\jj),\ii \varphi(\kk)$
is a basis if we consider $M_2(\ccr)$ as an $R$-module. This implies the claim
as the mapping $\varphi$ is an $R$-module homomorphism.
\end{proof}

Proposition \ref{appprop2} will help us prove that claim (3) of Theorem \ref{appthm} implies claim (4).

\begin{prop}
\label{appprop2}
Let $R$ be a commutative ring containing $\frac12$ and let $J$ be a left ideal of $\hr$.
The set $$J':=\varphi(J)+\ii\varphi(J)$$
is the smallest left ideal of $M_2(\ccr)$ that contains the  set $\varphi(J)$ and $\varphi^{-1}(J')=J$.
If $J$ is semiprime then $J'$ is also semiprime.
\end{prop}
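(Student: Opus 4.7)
The plan is to handle the three claims of the proposition in turn, with the semiprime claim requiring most of the work. For the first two claims, the key tool will be the direct sum decomposition $M_2(\CC[x_1,\ldots,x_d]) = \varphi(\HH[x_1,\ldots,x_d]) \oplus \ii\varphi(\HH[x_1,\ldots,x_d])$ provided by Lemma \ref{applem1}, where $\ii$ denotes the central complex scalar. Writing an arbitrary element of $M_2(\CC[x_1,\ldots,x_d])$ as $\varphi(u) + \ii\varphi(v)$ and multiplying it by $\varphi(a) + \ii\varphi(b) \in J'$ with $a,b \in J$, I would use $\ii^2 = -1$, centrality of $\ii$, and the fact that $\varphi$ is a ring homomorphism to see that the product equals $\varphi(ua - vb) + \ii\varphi(ub + va) \in J'$, so $J'$ is a left ideal. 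Any left ideal containing $\varphi(J)$ is closed under left multiplication by the central scalar $\ii$ and therefore also contains $\ii\varphi(J)$, so $J'$ is the smallest such. For $\varphi^{-1}(J') = J$: if $\varphi(u) \in J'$ then $\varphi(u) = \varphi(a) + \ii\varphi(b)$ with $a,b \in J$, and the uniqueness in Lemma \ref{applem1} forces $u = a \in J$ and $b = 0$.

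For the semiprime claim, the plan is to reduce to a pointwise statement. Since $\RR[x_1,\ldots,x_d]$ is Jacobson and contains $\frac{1}{2}$, Corollary \ref{cor2} together with Theorem \ref{thma} yields that every semiprime left ideal $J \subseteq \HH[x_1,\ldots,x_d]$ can be written as $J = \bigcap_\alpha I_{a_\alpha}$ for some family of good points $a_\alpha \in \HH_c^d$. An argument analogous to the preimage claim, using once more the uniqueness in Lemma \ref{applem1}, then gives $J' = \bigcap_\alpha (I_{a_\alpha})'$. Since an intersection of semiprime left ideals is semiprime, it will suffice to prove that each $(I_a)'$ is itself semiprime in $M_2(\CC[x_1,\ldots,x_d])$.

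For a complex good point $a \in \CC^d \subseteq \HH_c^d$, writing $F = \varphi(z) + \ii\varphi(w)$ entry-wise will reveal that $F \in (I_a)'$ is equivalent to the first column of $F(a)$ vanishing together with the second column of $F(\bar a)$ vanishing---the coefficient conjugation in the $(1,2)$ and $(2,2)$ entries of $\varphi$ is what produces the conjugate point. This gives the explicit description $(I_a)' = D(a, e_1) \cap D(\bar a, e_2)$, an intersection of two maximal left ideals by Theorem \ref{thmc}, hence semiprime. For a general good point $a \in \HH_c^d$, the pairwise commuting components $a_1,\ldots,a_d$ generate a commutative $\RR$-subalgebra of $\HH$ isomorphic to either $\RR$ or $\CC$, and in either case one can find a unit $s \in \HH$ with $a^* := s a s^{-1} \in \CC^d$; using centrality of the $x_i$ one then checks that $I_a = s I_{a^*} s^{-1}$, which after applying $\varphi$ becomes $(I_a)' = \varphi(s)(I_{a^*})'\varphi(s)^{-1}$. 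Since inner automorphisms of $M_2(\CC[x_1,\ldots,x_d])$ preserve semiprimeness, $(I_a)'$ is semiprime, completing the proof.

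The hardest part will be the explicit identification $(I_a)' = D(a, e_1) \cap D(\bar a, e_2)$ in the complex case: the inclusion $\subseteq$ is a routine entry-wise computation, but the reverse inclusion requires observing that the ``first column at $a$'' and ``second column at $\bar a$'' vanishing conditions together over-determine the $\CC[x_1,\ldots,x_d]$-components of $z$ and $w$, forcing $p_z(a) = p_w(a) = q_z(a) = q_w(a) = 0$ individually rather than merely entangled linearly, and thereby pinning down $z,w \in I_a$.
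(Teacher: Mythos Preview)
Your treatment of the first two claims matches the paper's, via Lemma~\ref{applem1}. For the semiprime claim you take a genuinely different route. The paper verifies the criterion of Lemma~\ref{matlem}(b) by a direct computation: starting from $A=[a_{ij}]$ with $a_{ij}A\in J'$ for all $i,j$, it unwinds the decomposition $A=\varphi(u)+\ii\varphi(v)$ through a chain of about twenty displayed identities, ultimately producing relations $u_k u\in J$ and $v_k v\in J$ for all $k$, so that Lemma~\ref{lem2} alone yields $u,v\in J$. Your argument instead passes through the Nullstellensatz: write the semiprime $J$ as $\bigcap_\alpha I_{a_\alpha}$ via Corollary~\ref{cor2} and Theorem~\ref{thma}, reduce (using uniqueness in Lemma~\ref{applem1}) to showing each $(I_a)'$ is semiprime, identify $(I_a)'=D(a,e_1)\cap D(\bar a,e_2)$ for $a\in\CC^d$, and handle general good points by conjugation.

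Your approach is correct and more conceptual; the identification $(I_a)'=D(a,e_1)\cap D(\bar a,e_2)$ is a nice observation. Two remarks. First, a cosmetic slip: with $a^*:=sas^{-1}$ one gets $I_a=s^{-1}I_{a^*}s$ rather than $sI_{a^*}s^{-1}$ (check via \eqref{eval} with $g=s^{-1}$); either swap $s\leftrightarrow s^{-1}$ or redefine $a^*$. Second, and more important in context: you invoke Corollary~\ref{cor2}, which rests on Theorems~\ref{thm1} and~\ref{thm2}. The purpose of the appendix is precisely to give a proof of Theorem~\ref{thm3} that bypasses those theorems and proceeds instead through the matrix Nullstellensatz. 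With your proof of Proposition~\ref{appprop2}, the step $(3)\Rightarrow(4)$ of Theorem~\ref{appthm} already presupposes the machinery it is meant to circumvent, and the appendix ceases to be an independent route. The paper's longer computational proof uses only the elementary Lemmas~\ref{applem1}, \ref{matlem} and~\ref{lem2}, keeping the alternative argument self-contained.
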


\begin{proof}
Pick a left ideal $J$ of $\hr$ and write $J':=\varphi(J)+\ii\varphi(J)$.
By the existence part of Lemma \ref{applem1} and the fact that $\varphi$ is a homomorphism,
it follows that $J'$ is a left ideal of $M_2(\ccr)$.
By the uniqueness part of Lemma \ref{applem1} we have $\varphi^{-1}(J')=J$.
Every left ideal of $M_2(\ccr)$ that contains $\varphi(J)$
must also contain  $\ii \varphi(J)$ and $J'$.

Suppose now that $J$ is semiprime. To show that $J'$ is also semiprime 
pick $A \in M_2(\ccr)$ such that  for every $X \in M_2(\ccr)$ we have
\begin{equation}
\label{izp4}
AXA \in J'.
\end{equation}
By the existence part of Lemma \ref{applem1}, we can write $A=\varphi(u)+\ii \varphi(v)$ for some 
$u,v \in  \hr$. Therefore, for every $x \in  \hr$,
\begin{equation}
(\varphi(u)+\ii \varphi(v)) \varphi(x)(\varphi(u)+\ii \varphi(v)) \in \varphi(J)+\ii\varphi(J)
\end{equation}
which, by the uniqueness part of Lemma \ref{applem1}, implies that
\begin{equation}
\label{izp4a}
uxu-vxv \in J \quad \mbox{ and } \quad vxu+uxv \in J
\end{equation}
for every $x \in  \hr$. By Remark \ref{rem00} and  \eqref{izp4a} we have
\begin{equation}
\label{izp4b}
 u_k u -v_k v \in J \quad \mbox{ and } \quad v_k u+u_k v  \in J
\end{equation}
for every $k$. It follows that for all $k$ and $l$ 
\begin{equation}
\label{izp4c}
\begin{array}{ccc}
(u_k u_l +v_k v_l)u & = & u_k( u_l u -v_l v )+v_l(v_k u+u_k v) \in J, \\
(u_k u_l +v_k v_l)v & = & -v_k( u_l u -v_l v )+u_l(v_k u+u_k v) \in J.
\end{array}
\end{equation}
From \eqref{izp4c} it follows that for every $z \in \hr$ and every $k$
\begin{equation}
\label{izp4d}
\begin{array}{ccl}
\quad (u_k u+v_k v)zu =  z (u_k u_0+v_k v_0)u+\ii z (u_k u_1+v_k v_1)u+ \\
 +\jj z (u_k u_2+v_k v_2)u+\kk z (u_k u_3+v_k v_3)u \in J
\end{array}
\end{equation}
and by the same argument
\begin{equation}
\label{izp4e}
(u_k u+v_k v)zv \in J.
\end{equation}
Equations \eqref{izp4d} and \eqref{izp4e} imply that for each $k$ and $z$
\begin{equation}
\label{izp4f}
(u_k u+v_k v)z(u_k u+v_k v)\in J.
\end{equation}
Since $J$ is semiprime, it follows that for each $k$
\begin{equation}
\label{izp4g}
u_k u+v_k v \in J.
\end{equation}
Equation \eqref{izp4b} and \eqref{izp4g} imply that for each $k$
\begin{equation}
\label{izp4h}
u_k u \in J \quad \mbox{ and } \quad v_k v \in J.
\end{equation}
Since $J$ is semiprime, equations \eqref{izp4h} imply that $u \in J$ and $v \in J$ by Lemma \ref{lem2}.
It follows that $A=\varphi(u)+\ii \varphi(v) \in J'$ as claimed.
\end{proof}

Theorem \ref{appthm} explains how Theorem \ref{thmb} can be deduced from Theorem \ref{thmd}.
The plan of the proof is to show (1) $\Rightarrow$ (2)  $\Rightarrow$ (3)  $\Rightarrow$ (4)  $\Rightarrow$ (1)
and  (1) $\Rightarrow$ (2)  $\Rightarrow$ (5)  $\Rightarrow$ (6)  $\Rightarrow$ (1).
Implications  (2)  $\Rightarrow$ (3) and  (2)  $\Rightarrow$ (5) are provided by Theorem \ref{thmd}.

\begin{thm}
\label{appthm}
Let $I$ be a left ideal of $\HH[x_1,\ldots,x_d]$ and let  $I'=\varphi(I)+\ii\varphi(I)$.
For every $f \in \HH[x_1,\ldots,x_d]$ the following are equivalent.
\begin{enumerate}
\item $f \in \sqrt{I}$,
\item $\varphi(f) \in \sqrt{I'}$,
\item $\varphi(f)$ belongs to the smallest semiprime left ideal containing $I'$.
\item $f$ belongs to the smallest semiprime left ideal containing $I$.
\item For every matrix $A \in M_2(\CC)$  there exists  $N \in  \NN_0$ such that
$$(A\varphi(f))^N \in I'+I'(A\varphi(f))+\ldots+I'(A\varphi(f))^N.$$
\item For every $b \in \HH$ there exists $N \in \NN_0$ such that 
$$(bf)^N \in I+I(bf)+\ldots+I(bf)^N.$$
\end{enumerate}
\end{thm}

\begin{proof} 
Assume that claim (1) is true. To prove claim (2) pick any 
 $c \in \CC^d$ and  $0  \ne w \in \CC^2$ such that $I' \subseteq D(c,w)$. 
We must show that $\varphi(f) \in D(c,w)$.

For every $g \in I$ we have $\varphi(g)(c)w=0$.
Write $g=h+\jj k$ where $k,h \in \CC[x_1,\ldots,x_d]$
and $w=[u \ v]^T$ where $u,v \in \CC$ are not both zero.
By  \eqref{fm1} we get
\begin{equation}
\label{fm3}
h(c)u-\bar{k}(c)v=0 \quad \mbox{ and } \quad k(c)u+\bar{h}(c)v=0.
\end{equation}
Write $b=u+\jj v$ and $a =b c b^{-1}$ and note that $a \in \HH_c^d$.
By \eqref{fm3},
\begin{equation}
\label{fm4}
g(a)b  =  (g b)(c) =((h+\jj k)(u+\jj v))(c)=\big(h u-\bar{k} v+\jj (k u+\bar{h} v)\big)(c) =0
\end{equation}
As $b \ne 0$ we have $g(a)=0$. This proves that $I \subseteq I_a$. Therefore, $f \in I_a$.
If we reverse the computation above, we see that  $f(a)b=0$ implies $\varphi(f)(c)w=0$.

By  \cite[Theorem 4]{cimpric},  claim (2) implies claim (3).

To prove that claim (3) implies claim (4) pick $f \in \HH[x_1,\ldots,x_d]$ such that $\varphi(f)$ 
belongs to the smallest semiprime left ideal of $M_2(\CC[x_1,\ldots,x_d])$ that contains $I'$.  
Write $J$ for  the smallest semiprime left ideal of $\HH[x_1,\ldots,x_d]$ that contains $I$. 
The left ideal $J':=\varphi(J)+\ii \varphi(J)$ is semiprime by the second part of Proposition \ref{appprop2}
and it clearly contains $I'$. Therefore $\varphi(f)$ belongs to $J'$ by the choice of $f$.
It follows that $f$ belongs to  $J$ by the first part of Proposition \ref{appprop2}.

Since every evaluation ideal $I_a$ defined by formula \eqref{def3} is semiprime,
claim (4) implies claim (1). 

By Theorem \ref{thm4}, claim (2) implies claim (5).

Assume that claim (5) is true. To prove claim (6) pick any $b \in \HH$ and apply claim (5) with $A=\varphi(b)$.
Since $I'=\varphi(I)+\ii \varphi(I)$ by the first part of Proposition \ref{appprop2}, 
there exist $N \in \NN$ and $u_k,v_k \in I$ for $k=0,1,\ldots,N$ such that
\begin{equation}
(\varphi(b) \varphi(f))^N=\sum_{k=0}^N (\varphi(u_k)+\ii \varphi(v_k)) (\varphi(b) \varphi(f))^k.
\end{equation}
By the uniqueness part of Lemma \ref{applem1},
\begin{equation}
\label{auxapp}
(\varphi(b) \varphi(f))^N=\sum_{k=0}^N \varphi(u_k) (\varphi(b) \varphi(f))^k,
\end{equation}
which implies claim (6) since $\varphi$ is one-to-one.


It was already proved in \cite[Theorem 1.5]{aya} that claim (6) implies claim (1).
\end{proof}

\section*{Acknowledgement}
We would like to thank the anonymous referees for their thorough review and insightful comments. Their feedback was instrumental in identifying and correcting several errors, suggesting improvements to the presentation, and providing interesting conjectures.


\begin{thebibliography}{99}
\bibitem{ap1} G. Alon, E. Paran,
A central quaternionic Nullstellensatz.	
J. Algebra 574 (2021), 252-261.

\bibitem{ap2}
G. Alon, E. Paran,
On the geometry of zero sets of central quaternionic polynomials.
J. Algebra 659 (2024), 780–788.

\bibitem{ap3}
G. Alon, A. Chapman, E. Paran,
On the geometry of zero sets of central quaternionic polynomials II.
arXiv:2408.08246 

\bibitem{aya}
M. Aryapoor, 
Explicit Hilbert's Nullstellensatz over the division ring of quaternions.
J. Algebra 657 (2024), 26–36.

\bibitem{cimpric}
J. Cimpri\v c,
Prime and semiprime submodules of  $R^n$ and a related Nullstellensatz for  $M_n(R)$.
J. Algebra Appl. 21 (2022), no. 11, Paper No. 2250217, 11 pp.

\bibitem{goldman}
O. Goldman,
Hilbert rings and the Hilbert Nullstellensatz.
Math. Z. 54 (1951), 136–140.

\bibitem{slice}
A. Gori, G. Sarfatti, F. Vlacci;
Zero sets and Nullstellensatz type theorems for slice regular quaternionic polynomials.
Linear Algebra Appl. 685 (2024), 162–181.

\bibitem{hansen}
F. Hansen,
On one-sided prime ideals.
Pacific J. Math. 58 (1975), no. 1, 79–85.

\bibitem{mr}
J. C. Robson, J. C. McConnell, 
\textit{Noncommutative Noetherian rings}.
Grad. Stud. Math. 30.
American Mathematical Society, Providence, RI, 2001, xx+636 pp.

\bibitem{reyes}
M. L. Reyes,
A one-sided prime ideal principle for noncommutative rings.
J. Algebra Appl. 9 (2010), no. 6, 877–919.

\bibitem{stone}
D. R. Stone,
Maximal left ideals and idealizers in matrix rings.
Canadian J. Math. 32 (1980), no. 6, 1397–1410.
\end{thebibliography}
\end{document}